\definecolor{codegreen}{rgb}{0,0.6,0}
\definecolor{codegray}{rgb}{0.5,0.5,0.5}
\definecolor{codepurple}{rgb}{0.58,0,0.82}
\definecolor{backcolour}{rgb}{0.95,0.95,0.92}
\lstdefinestyle{mystyle}{
  backgroundcolor=\color{backcolour},   commentstyle=\color{codegreen},
  keywordstyle=\color{magenta},
  numberstyle=\tiny\color{codegray},
  stringstyle=\color{codepurple},
  basicstyle=\ttfamily\footnotesize,
  breakatwhitespace=false,         
  breaklines=true,                 
  captionpos=b,                    
  keepspaces=true,                 
  numbers=left,                    
  numbersep=5pt,                  
  showspaces=false,                
  showstringspaces=false,
  showtabs=false,                  
  tabsize=2
}
\colorlet{refkey}{orange!20}
\colorlet{labelkey}{blue!30}
\newtheorem{theorem}{Theorem}[section]
\newtheorem{proposition}[theorem]{Proposition}
\newtheorem{lemma}[theorem]{Lemma}
\newtheorem{claim}[theorem]{Claim}
\newtheorem*{question*}{Question}
\theoremstyle{definition}
\newtheorem{definition}[theorem]{Definition}
\newtheorem{question}[theorem]{Question}
\newtheorem*{definition*}{Definition}
\theoremstyle{remark}
\newtheorem*{remark}{Remark}
\theoremstyle{theorem}
\newtheorem*{rep@theorem}{\rep@title}
\newcommand{\newreptheorem}[2]{%
\newenvironment{rep#1}[1]{%
 \def\rep@title{#2 \ref{##1}}%
 \begin{rep@theorem}}%
 {\end{rep@theorem}}}
\DeclareMathOperator{\Tr}{Tr}
\newcommand{\EE}{\mathbb{E}}
\newcommand{\RR}{\mathbb{R}}
\newcommand{\PP}{\mathbb{P}}
\newcommand{\NN}{\mathbb{N}}
\newcommand{\ZZ}{\mathbb{Z}}
\newcommand{\kAP}{\mathbf{kAP}}
\newcommand{\AP}{\mathbf{3AP}}
\newcommand{\nkAP}{\mathbf{\overline{kAP}}}
\newcommand{\Var}{\operatorname{Var}}
\newcommand{\Cov}{\operatorname{Cov}}
\newcommand{\tr}{\operatorname{Tr}}
\title{Number of arithmetic progressions in dense random subsets of $\ZZ/n\ZZ$}
\author[Berkowitz]{Ross Berkowitz}
\address{Department of Mathematics, Yale University, New Haven, CT 06520, USA}
\email{ross.berkowitz@yale.edu}
\author[Sah]{Ashwin Sah}
\address{Massachusetts Institute of Technology, Cambridge, MA 02139, USA}
\email{asah@mit.edu}
\author[Sawhney]{Mehtaab Sawhney}
\address{Massachusetts Institute of Technology, Cambridge, MA 02139, USA}
\email{msawhney@mit.edu}
\date{}
\begin{document}

\begin{abstract}
We examine the behavior of the number of $k$-term arithmetic progressions in a random subset of $\mathbb{Z}/n\mathbb{Z}$.  We prove that if a set is chosen by including each element of $\mathbb{Z}/n\mathbb{Z}$ independently with constant probability $p$, then the resulting distribution of $k$-term arithmetic progressions in that set, while obeying a central limit theorem, does not obey a local central limit theorem. The methods involve decomposing the random variable into homogeneous degree $d$ polynomials with respect to the Walsh/Fourier basis. Proving a suitable multivariate central limit theorem for each component of the expansion gives the desired result.
\end{abstract}

\maketitle

\section{Introduction}\label{sec:introduction}

Understanding the asymptotic behavior of sums of dependent random variables is a fundamental question in probability theory and combinatorics.  One particular random variable that has received attention is the number of arithmetic progressions in a random subset of $\mathbb{Z}/n\mathbb{Z}$.  For any subset $S\subseteq\mathbb{Z}/n\mathbb{Z}$ we define $\kAP(S)$ to count the number of $k$-term arithmetic progressions contained entirely in the set $S$. The probability space is constructed by choosing a random set $S$ by including each element of $\mathbb{Z}/n\mathbb{Z}$ independently at random with probability $p\in (0,1)$, where $p$ is a fixed constant not depending on $n$.  The natural question therefore is to understand the distribution of $\kAP(S)$ as $n$ grows.

%A first step towards understanding the distribution of $\kAP(S)$ is proving that if $k$ and $p$ are fixed then
It is not hard to show that $\kAP(S)$ obeys a central limit theorem.
% (See \cite{CCHT18} for a proof using dependency graphs although the Gaussian behavior of $\kAP(S)$ was certainly known earlier.)
That is, if we set $\mu_n=\EE[\kAP(S)]$ and $\sigma_n^2=\Var(\kAP(S))$, where $S$ is chosen as before, then for any fixed $a, b$
\[\PP\left[a\le \frac{\kAP(S)-\mu_n}{\sigma_n}\le b\right]=\frac{1}{\sqrt{2\pi}}\int_{a}^b\exp\left(-\frac{t^2}{2}\right)dx+o_{n,p}(1).\]

Given this Gaussian macroscopic behavior it is natural to guess that the distribution of $\kAP$ is ``smooth'' and therefore nearby integers are approximately as likely as one another. In particular, one may conjecture  that a local limit theorem estimating pointwise probabilities of $\kAP(S)$ that for any integer $x$

\[\PP[\kAP(S)=x]\stackrel{?}{=}\frac{1}{\sigma_n\sqrt{2\pi}}\exp\left(\frac{-(x-\mu_n)^2}{2\sigma_n^2}\right)+o\bigg(\frac{1}{\sigma_n}\bigg).\]

However the purpose of this note is to prove that this local limit theorem is in fact false and the distribution of $\kAP(S)$ oscillates wildly.

\begin{theorem}\label{thm:local-clt-failure}
Fix $p\in (0,1)$ and $k\ge 3$. Then for all sufficiently large $n$ relatively prime to $(k-1)!$ there is a point $x$ such that \[\Bigg|\mathbb{P}[\kAP(S) = x] - \frac{1}{\sigma_n\sqrt{2\pi}}\exp\bigg(\frac{-(x-\mu_n)^2}{2\sigma_n^2}\bigg)\Bigg| = \Omega\bigg(\frac{1}{\sigma_n}\bigg),\]
where $\mu_n$ and $\sigma_n$ in the statement are the expectation and standard deviation of $\kAP(S)$ and $S$ is constructed by choosing each element of $\ZZ/n\ZZ$ independently at random with probability $p$.
\end{theorem}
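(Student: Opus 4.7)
My plan is to exhibit a value $t_0 \neq 0$ where the characteristic function $\phi_n(t) := \EE[e^{it\kAP(S)}]$ satisfies $|\phi_n(t_0)| = \Omega(1)$; this is incompatible with LCLT once $t_0 \sigma_n \to \infty$, because the Gaussian comparison $|\phi_G(t_0)| = e^{-\sigma_n^2 t_0^2/2}$ becomes negligible. Following the abstract, I expand $\kAP(S)$ in the Walsh basis via $\one[i \in S] = p + \zeta_i$ with $\zeta_i$ centered:
\[
\kAP(S) = \sum_{d=0}^{k} f_d, \qquad f_d = \sum_{|T|=d} c_T \prod_{i \in T} \zeta_i, \quad c_T = p^{k-|T|} \cdot \#\{A : T \subseteq A\},
\]
where $A$ ranges over $k$-APs in $\ZZ/n\ZZ$. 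Translation invariance of $\ZZ/n\ZZ$ together with the hypothesis $\gcd(n,(k-1)!)=1$ (which rules out degenerate and palindromic APs) makes each singleton coefficient equal, so $f_1 = c_1(|S|-np)$ with $c_1 = p^{k-1}k(n-1)/2 = \Theta(n)$. Set $t_0 := 2\pi/c_1$ and $R := \sum_{d\ge 2} f_d$. Since $t_0 c_1 = 2\pi$ and $|S| \in \ZZ$, we have $e^{it_0 f_1} = e^{-2\pi inp}$, a deterministic unit scalar, so $|\phi_n(t_0)| = |\EE[e^{it_0 R}]|$.

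\textbf{Main technical step: joint CLT.} A counting argument gives $\Var(f_d) = \Theta(n^2)$ for every $d \ge 2$ (only $\Theta(n^2)$ $d$-subsets of $\ZZ/n\ZZ$ are contained in some $k$-AP, each with an $O(1)$ coefficient), so $\sigma_R := \sqrt{\Var R} = \Theta(n)$ and critically $t_0\sigma_R = \Theta(1)$. The heart of the argument is the joint convergence
\[
\bigl(f_d/\sqrt{\Var f_d}\bigr)_{d=2}^{k} \xrightarrow{d} \mathcal{N}(0, I_{k-1}).
\]
Each $f_d$ is a homogeneous multilinear polynomial in independent bounded mean-zero variables, so marginal CLTs follow from a de Jong-type fourth moment theorem, reducing each to the combinatorial estimates $\EE[f_d^4] = (3+o(1))\Var(f_d)^2$ and $\max_T c_T = o(\sqrt{\Var f_d})$. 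Joint convergence then follows from a Peccati--Tudor-type principle (distinct Walsh levels are $L^2$-orthogonal, so marginal CLTs automatically give joint Gaussian convergence with diagonal covariance), or equivalently from a direct joint cumulant expansion. I expect the fourth moment verification to be the main difficulty: it amounts to classifying tuples of $k$-APs in $\ZZ/n\ZZ$ by their element-overlap pattern and showing that non-pairing overlaps contribute only lower-order terms. The rich additive structure of $\ZZ/n\ZZ$ produces a larger taxonomy of AP overlaps than the analogous Erd\H{o}s--R\'enyi graph settings, and each class must be enumerated carefully.

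\textbf{From the characteristic function bound to pointwise failure.} Once the joint CLT is established, $R/\sigma_R \Rightarrow \mathcal{N}(0,1)$ gives $\phi_n(t_0) = \EE[e^{i(t_0\sigma_R)(R/\sigma_R)}] \to e^{-\lambda^2/2}$ with $\lambda = \lim t_0\sigma_R > 0$, so $|\phi_n(t_0)| \ge c > 0$ for all large $n$. A centering estimate gives
\[
|\phi_n(t_0+\eta) - e^{i\eta\mu_n}\phi_n(t_0)| = |\EE[e^{it_0\kAP(S)}(e^{i\eta(\kAP(S)-\mu_n)}-1)]| \le |\eta|\,\EE|\kAP(S)-\mu_n| \le |\eta|\sigma_n,
\]
so $|\phi_n(t)| \ge c/2$ on an interval around $t_0$ of length $\Omega(1/\sigma_n)$. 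Since $\sigma_n t_0 = \Theta(\sqrt n)$, $|\phi_G(t)| \le e^{-\Theta(n)}$ throughout this interval, whence $|\phi_n(t)-\phi_G(t)| \ge c/3$. Parseval then yields
\[
\sum_x \bigl(\PP[\kAP(S)=x] - G(x)\bigr)^2 = \frac{1}{2\pi}\int_{-\pi}^{\pi}|\phi_n(t)-\phi_G(t)|^2\,dt = \Omega(1/\sigma_n),
\]
with $G(x) := (\sigma_n\sqrt{2\pi})^{-1}\exp(-(x-\mu_n)^2/(2\sigma_n^2))$. Both distributions are concentrated on $\Theta(\sigma_n)$ integers, so the averaging bound $\max_x |\cdot|^2 \ge (\sum_x|\cdot|^2)/\#\{x\}$ produces some $x$ with $|\PP[\kAP(S)=x]-G(x)| = \Omega(1/\sigma_n)$, as required.
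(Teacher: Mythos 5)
Your strategy — pin down a non\nobreakdash-Gaussian frequency $t_0$ at which the characteristic function of $\kAP$ stays bounded away from $0$, then transfer to pointwise probabilities via Plancherel — is genuinely different from the paper's route (which works in Kolmogorov distance and compares interval probabilities $\PP[\kAP\in L_\alpha]$ vs.\ $\PP[\kAP\in L_\beta]$ against what an LCLT would force). The identification of the degree\nobreakdash-1 scale, the lattice constant $c_1=p^{k-1}k(n-1)/2$, and the choice $t_0=2\pi/c_1$ are all correct and exactly parallel to the paper's $G=C_1/\sqrt{pq}$.

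However, there is a concrete gap: the central claim $(f_d/\sqrt{\Var f_d})_{d=2}^{k}\Rightarrow\mathcal{N}(0,I_{k-1})$ is false because of the $d=2$ component. Since $\gcd(n,(k-1)!)=1$ makes every pair lie in exactly $\binom{k}{2}$ progressions, $f_2$ is proportional to $\sum_{u<v}\zeta_u\zeta_v=\tfrac12\bigl(Z^2-\sum_i\zeta_i^2\bigr)$ with $Z=\sum_i\zeta_i=|S|-np$, and $\sum_i\zeta_i^2$ is affine in $Z$; hence $f_2$ is a \emph{deterministic quadratic function of} $|S|$. In particular $f_2/n$ converges to a multiple of $\chi^2_1-1$, not to a Gaussian. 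Had you carried out the fourth-moment check you flagged as ``the main difficulty,'' you would find $\EE[(Z^2-1)^4]/\bigl(\Var(Z^2-1)\bigr)^2\to 60/4=15\neq 3$, so the de Jong criterion fails exactly at $d=2$ and the Peccati--Tudor transfer never gets off the ground. This is precisely why the paper's Theorem~\ref{thm:scaling-central-limit} proves joint Gaussianity only for $(\nkAP^1,\nkAP^3,\ldots,\nkAP^k)$, \emph{omitting} $\ell=2$, and then treats $\kAP^2$ as the deterministic quadratic $Q(\ell)-\kAP^1$ of the running sum. Your strategy can likely be repaired along the same lines: prove the joint CLT for $d\in\{1\}\cup\{3,\ldots,k\}$, deduce (since $f_2$ is a function of $f_1$ alone) that $f_2$ is asymptotically independent of $f_{\ge 3}$ with a shifted-$\chi^2_1$ limit, and observe that the resulting limit law for $R$ has a nowhere-vanishing characteristic function (the $\chi^2$ factor contributes $|1-2is|^{-1/2}>0$), so $|\phi_n(t_0)|=\Omega(1)$ still holds — just not by the calculation $\phi_n(t_0)\to e^{-\lambda^2/2}$. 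Separately, the final averaging step ``both distributions are concentrated on $\Theta(\sigma_n)$ integers'' is too loose, since $\kAP$ is supported on a window of size $\Theta(n^2)\gg\sigma_n$; the cleaner argument is to note that $\sup_x|\PP-G|=o(1/\sigma_n)$ together with $\sup_x G=O(1/\sigma_n)$ forces $\sup_x\PP=O(1/\sigma_n)$, whence $\sum_x(\PP-G)^2\le\sup|\PP-G|\cdot\sum_x(\PP+G)=o(1/\sigma_n)$, contradicting the Plancherel lower bound.
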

\begin{remark}
This failure of the local central limit likely extends to $\gcd(n,(k-1)!)\neq 1$, however the proof details become more technical and therefore we restrict our attention to this case.
\end{remark}

The first author discovered this failure of the local central limit theorem by sampling uniformly random subsets of $\mathbb{Z}/101\mathbb{Z}$ and counting the number of length 3 arithmetic progressions.  This histogram of results may be found in Figure \ref{experiment}.  Interestingly, it should be noted that subsequently and independently a study of Cai, Chen, Heller, and Tsegaye \cite{CCHT18} also conjectured that such a local limit theorem failed, but did not have a proof.

\begin{figure}
\begin{center}
\includegraphics[width=3in]{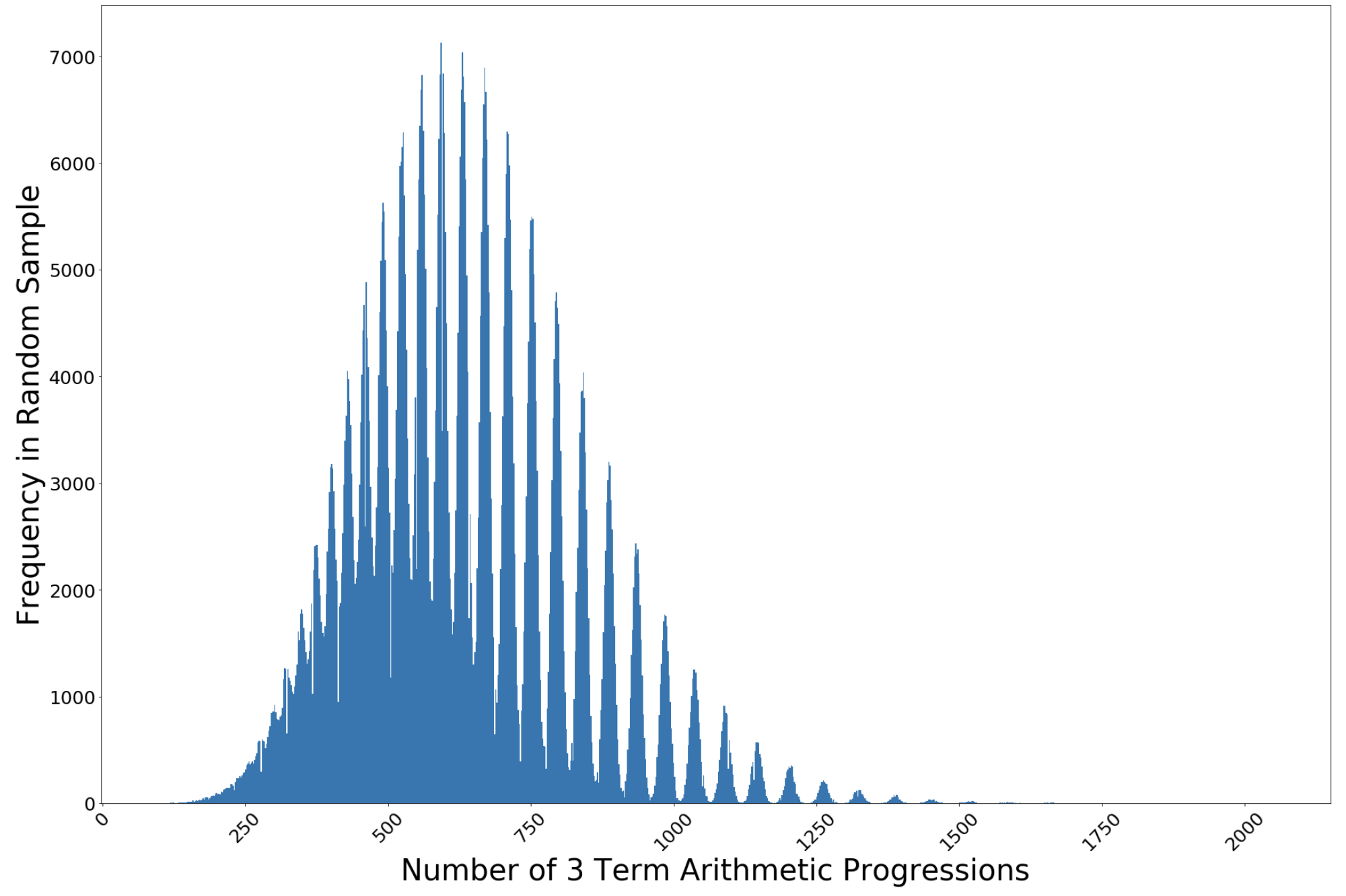} %.jpeg
\includegraphics[width=3in]{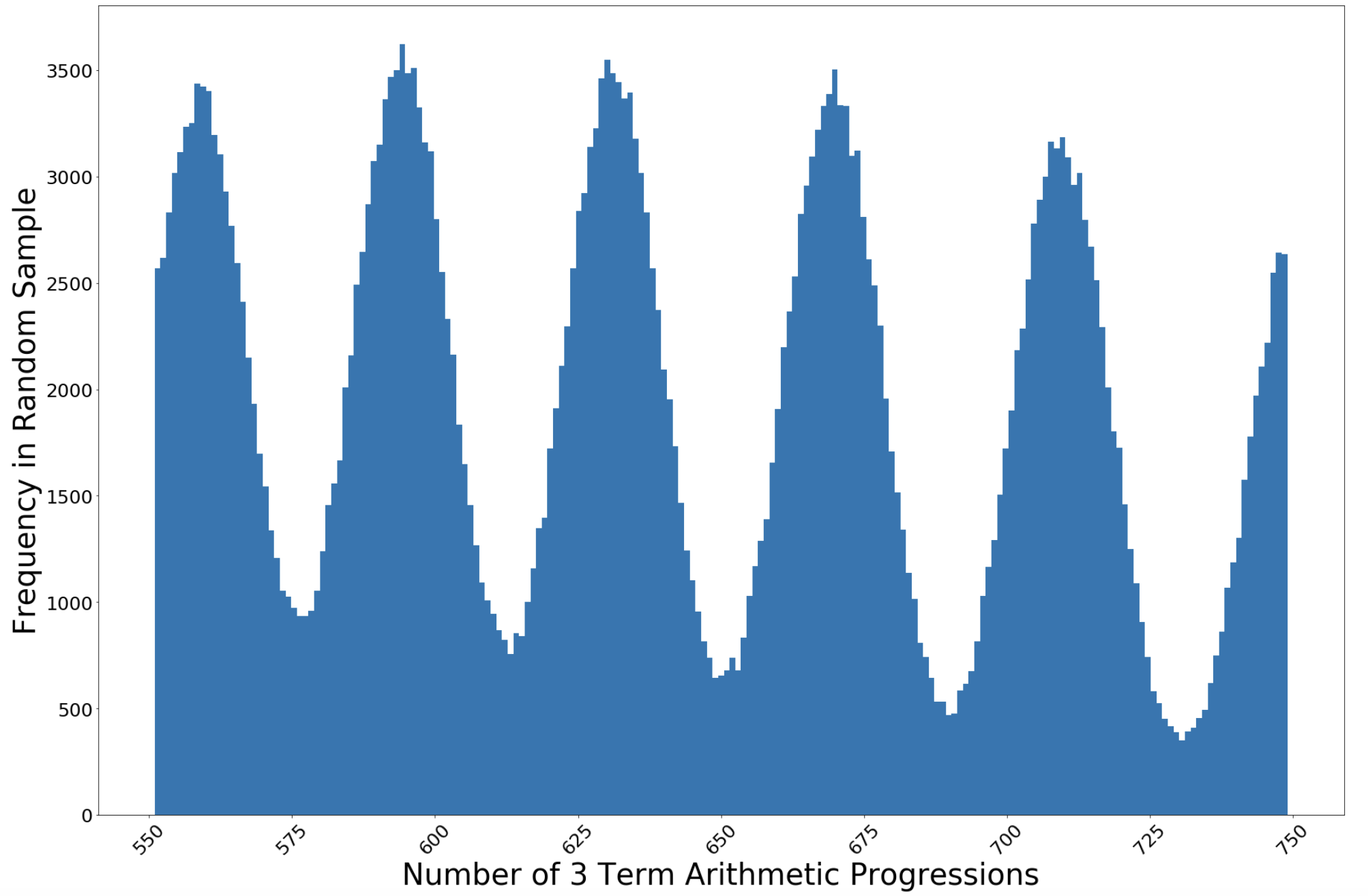}
\caption{
Histogram from sampling uniformly random subsets of $\mathbb{Z}/101\mathbb{Z}$ where 1,000,000 random samples were taken. While the the Gaussian-like distribution of $\AP(S)$ is visible there are wild local fluctuations.  The second picture on the right narrows the histogram to only looking at  $550\le \AP(S)\le 750$ showing the local fluctuations in greater detail.
}
\label{experiment}
\end{center}
\end{figure}

\subsection*{Related Work}

Significant attention has been given to understanding the large deviation probability of $\kAP(S)$, particularly in the sparse set regime where $p\to 0$. For example, recently Warnke \cite{W17}, Bhattacharya, Ganguly, Shao, and Zhao \cite{BGSZ20}, and Harel, Mousset, and Samotij \cite{HMS19} found precise upper tail bounds for $\kAP(S)$ in the sparse regime, while Janson and Warnke \cite{JW16} proved lower tail bounds. Additionally, Barhoumi-Andr\'eani, Koch, and Liu \cite{BKL19} proved a bivariate central limit theorem for $(\textbf{mAP}(S), \textbf{nAP}(S))$, understanding the joint distribution of the number of length $m$ and $n$ arithmetic progressions in sparse random sets.

Significant attention has also been focused on understanding local limit theorems in the analogous setting of $G(n,p)$. In particular work of Gilmer and Kopparty \cite{GK16} proves a local central theorem for triangle counts, with Berkowitz giving improved bounds in the case of triangles \cite{B1} and then proving the analogous theorem for cliques in \cite{B2}. Furthermore for general connected subgraph counts in $G(n,p)$ almost optimal anti-concentration results are known due to the work of Fox, Kwan, and Sauermann \cite{FKS19}. Our work points to a certain degree of separation between a local central limit theorem and anti-concentration for polynomial functions of Bernoulli random variables as already suggested by Fox, Kwan, and Sauermann \cite{FKS19}. In particular, the random variable $\kAP(S)$ experimentally appears to satisfy anti-concentration (at the optimal scale with each point probability being at most $O(1/n^{3/2})$) but as we will prove it does not satisfy a local central limit theorem.

\subsection*{Outline of Paper}
In \cref{sec:expansion} we compute the expansion of the $\kAP$ in the $p$-biased Fourier basis.  We then use this expansion to give a high level overview of our arguments. Sections \ref{sec:Reduction1}-\ref{sec:kolmogorov-distance} contain the main technical work of analyzing the asymptotic behavior of $\kAP$, and a more detailed overview of the argument can be found at the end of \cref{sec:Reduction1}.  The proof of the nonexistence of a local central limit theorem is in \cref{sec:local-clt-failure}. Finally, we end with some outstanding questions left by our work in \cref{sec:conclusion}.

\section{Expansion of $k$-AP function into $p$-biased Basis and Outline of the Argument} \label{sec:expansion}
We first expand the counting function of the number of $k$-APs into a $p$-biased Fourier basis. In order to do so define $x_i$ to be indicator if the element $i$ is in the subset of $\ZZ/n\ZZ$ which we are examining. Then we use the change of variables
\[y_i = \frac{x_i -p}{\sqrt{p(1-p)}}\]
and note that $\EE[y_i] = 0$ and $\Var[y_i] = 1$. Now the $k$-AP counting function is
\begin{align*}
\kAP(x) &= \sum_{a\in \ZZ/n\ZZ}\sum_{d\in [n/2]} \prod_{i=0}^{k-1}x_{a+id}\\
&=\sum_{a\in \ZZ/n\ZZ}\sum_{d\in [n/2]} \prod_{i=0}^{k-1}(y_{a+id}\sqrt{p(1-p)} + p)\\
&=\sum_{a\in \ZZ/n\ZZ}\sum_{d\in [n/2]} \sum_{\ell=0}^{k}\sum_{S\in {\binom{[k]}{\ell}}} p^{k - |S|}\prod_{i \in S} (y_{a+id}\sqrt{p(1-p)})\\
&=\sum_{\ell=0}^{k}\sum_{a\in \ZZ/n\ZZ}\sum_{d\in [n/2]} \sum_{S\in {\binom{[k]}{\ell}}} p^{k - \frac{|S|}{2}}(1-p)^{\frac{|S|}{2}}\prod_{i \in S} y_{a+id}.
\end{align*}
Furthermore define 
\[\kAP^{\ell}(y) = \sum_{a\in \ZZ/n\ZZ}\sum_{d\in [n/2]} \sum_{S\in {\binom{[k]}{\ell}}} p^{k - \frac{\ell}{2}}(1-p)^{\frac{\ell}{2}}\prod_{i \in S} y_{a+id}.\]
The key idea is to note that $\kAP^{\ell}$ for $\ell = 1$ versus all higher values of $\ell$ live on different scales. Our main lemma will be to prove a quantitative convergence of these components to $k$ appropriately scaled multivariate Gaussian and then using this analysis we will subsequently prove the desired failure of a local central limit theorem. For the sake of simplicity we also define 
\[\nkAP^{\ell}(y) = \frac{1}{\sigma_{\ell}} \sum_{a\in \ZZ/n\ZZ}\sum_{d\in [n/2]} \sum_{S\in {\binom{[k]}{\ell}}}\prod_{i \in S} y_{a+id}\]
where $\sigma_\ell$ is chosen so that $\Var[\nkAP^{\ell}(z)] = 1$, if $z$ is a vector of independent standard normals. In particular note $\sigma_\ell$ is independent of $p$. Note that these multivariate functions will be the central object of study and proving a sufficient strong result regarding their joint distribution will give the desired failure of a local central limit theorem. Finally define $\sigma$ to be the variance of $\kAP(z)$.

Note that all the different functions defined here are multilinear, since $\gcd(n,(k-1)!) = 1$.

\subsection*{Asymptotics of $\sigma_\ell$} We note that $\sigma_\ell = \Theta_k(n)$ is easily computed for $\ell \neq 1$ and that 
$\sigma_1 = \Theta_k(n^{\frac{3}{2}})$. This follows immediately from the fact that any two elements of $\ZZ/n\ZZ$ lie in $O_k(1)$ $k$-term arithmetic progressions jointly.

\subsection*{Overview of the Main Arguments} 
Given the above expansion we are now in position to give a general overview of the proof. The argument is centered on demonstrating that the functions $\kAP^{1}$ and the remaining $\kAP^{\ell}$ fluctuate independently and on differing scales and then use this to deduce a failure of the local central limit theorem. The key claim is that $\{\kAP^{\ell}\}_{\ell=1, 3\le \ell \le k}$, suitably normalized, approaches in distribution a set of independent Gaussian (along with quantitative bounds). In particular we prove the following result.
\begin{theorem}\label{thm:scaling-central-limit}
For $g\in C^3(\RR^{k-1})$, we have
\[|\EE g(\nkAP^{1}(y),\nkAP^{3}(y),\ldots,\nkAP^{k}(y)) - \EE g(Z)|\lesssim_{k}\frac{M_2(g)+M_3(g)}{n^{1/2}},\]
where $Z$ is a standard Gaussian random vector in $\RR^{k-1}$.
\end{theorem}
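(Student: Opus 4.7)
The strategy is a two-step reduction: first a Lindeberg-style replacement swapping each bounded variable $y_i$ for an independent standard Gaussian $z_i$, and then a chaos CLT comparing the resulting polynomial vector $F(z) := (\nkAP^1(z),\nkAP^3(z),\ldots,\nkAP^k(z))$ to a standard Gaussian $Z$ in $\RR^{k-1}$.

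For the replacement step I would telescope through the hybrids $y^{(i)} = (z_1,\ldots,z_i,y_{i+1},\ldots,y_n)$ and bound each single-coordinate swap. Because $\gcd(n,(k-1)!) = 1$ makes every $\nkAP^\ell$ multilinear in each coordinate, fixing all but the $i$-th coordinate one may write $F = A + v_i B$ with $A,B$ independent of $v_i$ and $B = \partial_i F$. A third-order Taylor expansion of $g$ at $A$ shows that the zeroth, first, and second-order terms cancel in expectation after swapping $y_i \leftrightarrow z_i$ because $\EE y_i = \EE z_i = 0$ and $\EE y_i^2 = \EE z_i^2 = 1$, leaving a cubic remainder bounded by $C\,M_3(g)\,\EE[(|y_i|^3 + |z_i|^3)\,|B|^3]$. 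By Bonami hypercontractivity applied to the multilinear polynomial $\partial_i F$ of degree $\le k-1$, one gets $\EE|B|^3 \lesssim_k (\EE|B|^2)^{3/2}$, and an explicit second-moment computation using the asymptotics $\sigma_1 = \Theta(n^{3/2})$ and $\sigma_\ell = \Theta(n)$ for $\ell \ge 3$ from the excerpt yields $\EE|\partial_i \nkAP^\ell|^2 = O_k(1/n)$ uniformly in $i$ and $\ell$. Summing over $i \in \ZZ/n\ZZ$ then produces a total Lindeberg error of $O_k(M_3(g)\,n^{-1/2})$.

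For the chaos step I would exploit the Wiener-chaos structure of $F(z)$. The degree-one component $\nkAP^1(z)$ is a scalar multiple of $\sum_a z_a$ and, by the normalization of $\sigma_1$, is \emph{exactly} $\mc{N}(0,1)$. For $\ell \ge 3$, $\nkAP^\ell(z)$ is a homogeneous chaos of order $\ell$ with unit variance, and by Wick's theorem chaoses of distinct orders in the same underlying Gaussians are uncorrelated, so $\Cov(F(z)) = I$. A quantitative multivariate fourth-moment theorem (Peccati--Tudor, or the smooth-function form of Nourdin--Peccati--Stein) then reduces the remaining bound to controlling $\EE[\nkAP^\ell(z)^4] - 3$ for each $\ell \ge 3$, and naturally introduces an $M_2(g)$ prefactor via the second derivatives of $g$ appearing in the Stein-kernel identity. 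Expanding the fourth moment by Wick's theorem, the three ``planar'' pairings contribute exactly $3(\EE[\nkAP^\ell(z)^2])^2 = 3$; every other pairing identifies two AP index tuples $(a,d) \ne (a',d')$ in at least two positions, and since a $k$-AP is determined by any two of its terms, this imposes a nontrivial linear constraint saving a factor of $n$. This gives $\EE[\nkAP^\ell(z)^4] - 3 = O_k(1/n)$, and thus a Gaussian-to-Gaussian error $O_k(M_2(g)\,n^{-1/2})$.

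The main obstacle is the Wick-pairing enumeration in the fourth-moment estimate: one must classify all non-planar matchings of the $4\ell$ Gaussian factors indexed by $\{a_t + i\,d_t\}$ and verify that each such matching forces enough linear relations among the four AP data $(a_t,d_t)$ to save the required factor of $n$. The rigidity of arithmetic progressions and the multilinearity of each $\nkAP^\ell$ (itself a consequence of $\gcd(n,(k-1)!) = 1$) are what make this combinatorial bookkeeping tractable; without the rigidity, non-planar pairings could contribute at the same scale as the diagonal ones and destroy the Gaussian limit.
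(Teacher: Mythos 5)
Your proof plan is correct, and the first half coincides with the paper's: the Lindeberg telescoping with a third-order Taylor remainder bounded via $(2,3,\rho)$-hypercontractivity is precisely the proof of the multilinear invariance principle that the paper imports from O'Donnell (\cref{thm:invariance}), and your influence estimate $\EE|\partial_i\nkAP^\ell|^2 = O_k(1/n)$ is the same one the paper uses. Where you genuinely diverge is the Gaussian-to-Gaussian step. The paper proves \cref{thm:scaling-central-limit-2} by Meckes's exchangeable-pairs theorem (\cref{thm:exchangeable}): it constructs the resampling pair $(W, W')$, computes $\Lambda = \mathrm{diag}(\ell/n)$ in \cref{prop:exchange}, and then spends \cref{sec:bounding-E-prime,sec:bounding-moments} bounding $\EE\|E'\|_{\HS}$ and $\EE|W'-W|^3$, which is where the off-diagonal cross-terms $(E')_{i,j}$ and the diagonal variance terms enter. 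You instead invoke the Malliavin--Stein / Peccati--Tudor machinery on the Wiener chaos decomposition, reducing to fourth-moment defects $\EE[\nkAP^\ell(z)^4] - 3 = O_k(1/n)$. These are sibling implementations of Stein's method, and the combinatorial work they need is morally the same: your enumeration of non-planar Wick pairings, hinging on the rigidity that two points determine an AP, is exactly the counting the paper does in \cref{subsec:diagonal-terms}. Two small caveats worth flagging in your route: (i) the quantitative multivariate Nourdin--Peccati bound is stated in terms of $\EE[(C_{ij} - \sang{DF_i, -DL^{-1}F_j})^2]$ for \emph{all} pairs $i,j$, not just the diagonal ones, so you do need to control the cross-contractions $\|f_i \otimes_r f_j\|$ for chaoses of distinct orders; this follows from the individual fourth-moment defects by the Cauchy--Schwarz identity $\|f_p\otimes_r f_q\|^2 = \sang{f_p\otimes_{p-r}f_p, f_q\otimes_{q-r}f_q}$, but the reduction is not quite ``only individual fourth moments'' as your plan states. (ii) You should note that the degree-one component $\nkAP^1(z)$ is exactly Gaussian so its diagonal Stein-kernel term vanishes identically, which is what lets it coexist with the higher chaoses in the same vector without contributing to the error. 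With those two points filled in, your plan gives the same rate $O_k((M_2(g)+M_3(g))/\sqrt{n})$ as the paper; what the exchangeable-pairs route buys is a more elementary, self-contained argument not relying on Malliavin calculus, while your route buys a conceptually cleaner reduction (fourth moments of individual chaoses, plus Cauchy--Schwarz for the cross-terms) at the cost of importing heavier machinery.
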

In the notation above $M_2(g)$ and $M_3(g)$ are the maximal operator norms of the second and third order derivative tensors of $g$; more informally these quantities simply measure the fluctuations in $g$. In order to prove the desired CLT result we first use a version of the Gaussian Invariance Principle which allows one to replace scaled Bernoulli's with Gaussians; this reduction appears in \cref{sec:Reduction1}. This reduction, while not strictly necessary, simplifies the argument. Then we use the theory of exchangeable pairs to deduce the necessary central limit theorem, which constitutes \cref{sec:exchangeable-pairs}. Roughly, the exchangeable pairs argument proceeds by using analyzing a single draw of Gaussian $y$'s for vector $(\nkAP^{1}(y),\nkAP^{3}(y),\ldots,\nkAP^{k}(y))$ and analyzes what occurs if precisely one of the $y$ at random is resampled. The method of exchangeable pairs allows one to deduce a quantitative central limit theorem from this perturbative analysis; however, a significant amount of effort is expended in verifying the necessary moment estimates. In particular, one consequence of the above analysis is that 
\[\frac{\sum_{a\in\ZZ/n\ZZ}\sum_{d\in [n/2]} z_{a}z_{a+d}z_{a+2d}}{\sqrt{\binom{n}{2}}}\overset{d}{\longrightarrow}\mathcal{N}(0,1)\]
if $z_i$ are independent standard normals, since this corresponds to $\nkAP^3$ for $k = 3$. Specializing the analysis to this case may be useful for some readers. Note here that if $z_{i}$ were not centered then the standard deviation jumps from $\Theta(n)$ to $\Theta(n^{3/2})$, and the corresponding central limit theorem is a easy consequence of the method of dependency graphs as demonstrated in \cite{CCHT18}.

We next convert these results into a bound on Kolmogorov distance and then deduce the failure of a local central limit theorem using a sampling argument in \cref{sec:kolmogorov-distance,sec:local-clt-failure}. Ultimately the failure of the local central theorem is derived essentially from the fact that $\kAP^{1}$ takes on values which are separated by $\Theta_{k,p}(n)$ from one another and the smearing which occurs due to the remaining components also lives on the scale of $\Theta_{k,p}(n)$. Thus it is not able to flatten this effect out. These two sections give one way of implementing this intuition.

\section{Reduction to Gaussian Estimate}\label{sec:Reduction1}
We first use an invariance principle for multilinear polynomials. In order to do so we need to define the influence of a variable for a Boolean function and whether a random variable is hypercontractive. This step is not strictly speaking necessary, but does not weaken our bounds and allows us to establish the rest of the argument in a slightly cleaner form.
\begin{definition}
The influence of a variable $x_i$ in a boolean function $F(x_1,\ldots,x_n) = \sum_{S\subseteq [n]} a_S\prod_{i\in S} x_i$ is 
\[\textbf{Inf}_t[F] = \sum_{t\in S\subseteq [n]} a_S^2.\]
\end{definition}
\begin{definition}
A random variable $X$ is $(p,q,\rho)$-hypercontractive ($1\le p\le q \le \infty$ and $0\le \rho < 1$) if for all constants $a,b\in \RR$ we have 
\[||a + \rho b X||_{q} \le ||a+bX||_{p}.\]
\end{definition}
Finally we need that the $p$-biased bit is hypercontractive with the appropriate constants. This follows from the following result in \cite{O14}.
\begin{theorem}
 If $X$ is a mean zero, symmetric, discrete random variable with
 \[\lambda = \min_{x\in \textit{Range}(X)} \PP[X = x].\] Then $X$ is $(2,3,\rho)$-hypercontractive for $\rho = \frac{1}{\sqrt{q-1}}\cdot \lambda^{\frac{1}{2}-\frac{1}{q}}.$
\end{theorem}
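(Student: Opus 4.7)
The plan is to reduce the hypercontractivity inequality to a single-variable moment inequality and then exploit the minimum-probability hypothesis to control the higher moments of $X$.

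First I would recall that the inequality $\|a + \rho b X\|_q \le \|a + bX\|_2$ is homogeneous of degree $1$ in $(a,b)$, so after rescaling it is enough to prove it with $b = 1$. Moreover, since $X$ is symmetric, $\|a + X\|_2^2 = a^2 + \EE[X^2]$ and $\EE|a + \rho X|^q = \EE|a - \rho X|^q$ are both even functions of $a$, so we may further assume $a \ge 0$. This reduces the theorem to the single-variable moment bound
\[\EE\bigl|a + \rho X\bigr|^q \;\le\; \bigl(a^2 + \EE[X^2]\bigr)^{q/2} \qquad \text{for all } a \ge 0.\]

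Next I would attack this moment bound using a Taylor-style expansion. For $q$ an even integer, binomial expansion (together with the vanishing of odd moments of $X$ by symmetry) gives
\[\EE(a + \rho X)^q \;=\; \sum_{j=0}^{q/2} \binom{q}{2j}\, a^{q-2j}\, \rho^{2j}\, \EE[X^{2j}],\]
while the right-hand side expands as $\sum_{j=0}^{q/2} \binom{q/2}{j}\, a^{q-2j}\, \EE[X^2]^j$. A term-by-term comparison then reduces everything to the moment inequality
\[\rho^{2j}\, \EE[X^{2j}] \;\le\; \frac{\binom{q/2}{j}}{\binom{q}{2j}}\, \EE[X^2]^j.\]
Using $\binom{q}{2j}/\binom{q/2}{j} \le (q-1)^j$ (a standard estimate behind the Bonami--Beckner constant), together with the choice $\rho = (q-1)^{-1/2}\, \lambda^{1/2 - 1/q}$, this in turn reduces to the bound $\EE[X^{2j}] \lesssim_{q} \lambda^{1-j}\,\EE[X^2]^j$. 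This last bound is the crucial input: the symmetry of $X$ and the hypothesis that every atom has mass at least $\lambda$ forces $|\support(X)| \le 1/\lambda$, and for each symmetric pair $\{\pm x\}$ of mass $2p \ge 2\lambda$ one has $p\,x^{2j} = p^{1-j}(p\,x^2)^j \le \lambda^{1-j}(p\, x^2)^j$; summing over the support and combining via Hölder yields the desired control of $\EE[X^{2j}]$ in terms of $\EE[X^2]$.

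For real (non-integer) $q$, and for the case when $a$ is small enough that $a + \rho X$ can change sign (so the clean polynomial expansion fails), I would appeal to standard extensions: log-convexity of $\log \|a + \rho X\|_q$ in $1/q$ allows interpolation from the even integers to all $q \ge 2$, and a rescaling/cutoff argument (or, alternatively, replacing $|t|^q$ by its smoothed majorant) handles the small-$a$ regime without changing the leading constant. The main obstacle in executing this plan is verifying the moment bound $\EE[X^{2j}] \le C_q \lambda^{1-j} \EE[X^2]^j$ with sharp enough constants that, after summation, the resulting inequality actually matches the claimed value of $\rho$; all the other steps are essentially rearrangements, whereas this moment estimate is where the precise dependence on $\lambda$ is forced and thus must be extracted tightly.
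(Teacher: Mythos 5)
The paper does not contain a proof of this statement; it is cited directly from \cite{O14}, so there is no in-paper argument to compare against. Evaluating your sketch on its own terms: the reduction to $\EE\lvert a+\rho X\rvert^q \le (a^2+\EE[X^2])^{q/2}$ is correct, and your even-$q$ binomial expansion together with the moment bound $\EE[X^{2j}] \le \lambda^{1-j}\EE[X^2]^j$ is sound --- that bound in fact holds with constant exactly $1$ (write $p_ix_i^{2j}=p_i^{1-j}(p_ix_i^2)^j\le\lambda^{1-j}(p_ix_i^2)^j$ and use $\sum a_i^j\le(\sum a_i)^j$ for $j\ge 1$), so the step you flag as the ``main obstacle'' is not where the difficulty actually lies.

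The real difficulty is that the paper needs $q=3$, an odd integer, and then $\lvert a+\rho X\rvert^3$ is not a polynomial in the regime where $a+\rho X$ changes sign. That regime is exactly where the $\lambda$-dependence is forced: for $a$ large enough that $a+\rho X>0$ almost surely, the needed inequality reduces (after dividing by $a^3$ and setting $t=\EE[X^2]/a^2$) to $1+3\rho^2 t\le(1+t)^{3/2}$, i.e.\ just $\rho\le1/\sqrt{q-1}$ with no $\lambda$ at all, whereas at $a=0$ one needs $\rho^q\EE\lvert X\rvert^q\le\EE[X^2]^{q/2}$, which is where $\rho\le\lambda^{1/2-1/q}$ comes from. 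Your proposal to bridge this by Lyapunov interpolation between $q=2$ and $q=4$ does not yield the stated constant: since $\rho_3=2^{-1/2}\lambda^{1/6}$ exceeds $\rho_4=3^{-1/2}\lambda^{1/4}$ for small $\lambda$, controlling $\lVert a+\rho_3X\rVert_4$ via $q=4$ hypercontractivity incurs a rescaling factor $\rho_3/\rho_4=\Theta(\lambda^{-1/12})$, and tracking that through the interpolation (look at the derivative in $\EE[X^2]/a^2$ at $0$) forces $\rho\lesssim\lambda^{1/4}$, far weaker than $\lambda^{1/6}$. So the ``rescaling/cutoff'' you invoke for the small-$a$ regime is not a deferrable technicality; a complete proof for $q=3$ must handle that regime and the non-polynomiality of $\lvert\cdot\rvert^3$ directly, for instance by reducing to two-point distributions and verifying the resulting scalar inequality, which is the route taken in \cite{O14}.
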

We are now ready to define a version of the multifunction invariance principle; this version appears in \cite{O14}. 
\begin{theorem}
Let $F^{(1)},\ldots, F^{(d)}$ be formal
n-variate multilinear polynomials each of degree at most $k\in \NN$. Let
$x_1,\ldots,x_n$ and $y_1,\ldots,y_n$ be independent $\mathbb{R}$-valued random variables
such that $\EE[x_t]=\EE[y_t]=0$ and $\EE[x_t^2]=\EE[y_t^2]=1$.
Assume each random variable $x_t$ and $y_t$ is $(2,3,\rho)$-
hypercontractive. Then for any $C^3$ function $\psi:\RR^d\to \RR$
satisfying $||\partial^\beta\psi||_{\infty}\le C$ for all $|\beta| = 3$,
\[\bigg|\EE[\psi(F(x))-\psi(F(y))]\bigg|
\le \frac{Cd^2}{3\rho^{3k}}\sum_{t=1}^{n}\sum_{j=1}^{d}\textbf{Inf}_t[F^{(j)}]^{3/2}\]
\end{theorem}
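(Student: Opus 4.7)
The approach is the standard Lindeberg hybrid (replacement) argument, swapping one coordinate at a time from $x$ to $y$. Define $z^{(t)} = (y_1, \ldots, y_t, x_{t+1}, \ldots, x_n)$ and telescope:
\[\EE[\psi(F(x)) - \psi(F(y))] = \sum_{t=1}^{n}\EE\bigl[\psi(F(z^{(t-1)})) - \psi(F(z^{(t)}))\bigr],\]
so it suffices to bound each summand. Fix $t$ and, using multilinearity, decompose $F^{(j)}(z) = U^{(j)} + z_t V^{(j)}$ where $U^{(j)}, V^{(j)}$ depend only on coordinates $s\ne t$ and $V^{(j)}$ has degree at most $k-1$. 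Since the non-$t$ coordinates are independent with unit variance, $\EE[(V^{(j)})^2] = \textbf{Inf}_t[F^{(j)}]$, and $(U,V)$ is independent of $z_t$ under either hybrid law.

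Taylor expand $\psi$ to third order about $U = (U^{(1)}, \ldots, U^{(d)})$:
\[\psi(U + z_t V) = \psi(U) + z_t\sum_{j} V^{(j)}\partial_j\psi(U) + \tfrac{z_t^2}{2}\sum_{j_1,j_2} V^{(j_1)} V^{(j_2)} \partial_{j_1 j_2}\psi(U) + R_t(z_t),\]
where $|R_t(z_t)| \le \tfrac{C|z_t|^3}{6}\sum_{j_1,j_2,j_3}|V^{(j_1)} V^{(j_2)} V^{(j_3)}|$ by the hypothesis $\|\partial^\beta\psi\|_\infty \le C$ for $|\beta|=3$. Because $x_t$ and $y_t$ share their moments of orders $0, 1, 2$ and are independent of $(U,V)$, the constant, linear, and quadratic pieces contribute equal expectations under the two hybrids and cancel in the telescope summand. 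Only the remainder survives, so $|\EE[\psi(F(z^{(t-1)})) - \psi(F(z^{(t)}))]| \le \EE|R_t(x_t)| + \EE|R_t(y_t)|$.

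To estimate each $\EE|R_t(z_t)|$ I would rewrite $|z_t|^3 |V^{(j_1)} V^{(j_2)} V^{(j_3)}| = \prod_{i=1}^{3}|z_t V^{(j_i)}|$, then apply AM-GM $|abc|\le\tfrac13(|a|^3+|b|^3+|c|^3)$ and symmetrize over the $d^3$ triples to obtain the pointwise bound
\[\sum_{j_1,j_2,j_3}\prod_{i=1}^{3}|z_t V^{(j_i)}| \le d^2\sum_{j=1}^{d} |z_t V^{(j)}|^3.\]
For each $j$, the product $z_t V^{(j)}$ is a multilinear polynomial of degree at most $k$ in independent $(2,3,\rho)$-hypercontractive coordinates, so the iterated multilinear hypercontractive inequality gives $\|z_t V^{(j)}\|_3 \le \rho^{-k}\|z_t V^{(j)}\|_2$, whence $\EE|z_t V^{(j)}|^3 \le \rho^{-3k}(\EE[z_t^2]\EE[(V^{(j)})^2])^{3/2} = \rho^{-3k}\textbf{Inf}_t[F^{(j)}]^{3/2}$. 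Combining both remainders picks up a factor of $2$, giving $|\EE[\psi(F(z^{(t-1)})) - \psi(F(z^{(t)}))]| \le \tfrac{Cd^2}{3\rho^{3k}}\sum_{j}\textbf{Inf}_t[F^{(j)}]^{3/2}$, and summing over $t$ yields the claim.

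The main obstacle is the iterated multilinear hypercontractive inequality $\|f\|_3 \le \rho^{-\deg f}\|f\|_2$; it is a standard coordinate-by-coordinate induction from the single-variable $(2,3,\rho)$-hypercontractive hypothesis, but one must be careful that the hybrid distribution $z^{(t-1)}$ mixes $x$-coordinates and $y$-coordinates (the induction still goes through since both families are coordinate-wise hypercontractive and jointly independent). Everything else is bookkeeping of the moment-matching cancellation and the symmetrization that produces the combinatorial factor $d^2$.
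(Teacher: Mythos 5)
Your proposal is correct and is essentially the standard Lindeberg (one-coordinate-at-a-time replacement) proof of the multilinear invariance principle; the paper does not re-derive this theorem but simply cites it from O'Donnell's book, where the argument is precisely the hybrid/Taylor/AM--GM/hypercontractivity chain you describe. The bookkeeping checks out: the third-order Taylor remainder, the $d^2$ factor from symmetrizing the triple sum via $|abc|\le\tfrac13(|a|^3+|b|^3+|c|^3)$, the factor $2$ from the two hybrids giving $\tfrac{2}{6}=\tfrac13$, and the $\rho^{-3k}$ from applying $\|z_tV^{(j)}\|_3\le\rho^{-k}\|z_tV^{(j)}\|_2$ to the degree-$\le k$ multilinear polynomial $z_tV^{(j)}$ all reproduce the stated constant $Cd^2/(3\rho^{3k})$, and your observation that the hypercontractive induction tolerates the mixed $x$/$y$ coordinates of the hybrid is the right point of care.
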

For our application this will amount to the following theorem on the distribution of $\nkAP(y)$ against test functions. 
\begin{theorem}\label{thm:invariance}
Let $y_i$ be defined as before and $y_i'$ be standard normal random variables. Then for any $C^3$ function $\psi:\RR^{k-1}\to \RR$
satisfying $||\partial^\beta\psi||_{\infty}\le C$ for all $|\beta| = 3$,
\[\bigg| \mathbb{E}[\psi(\nkAP^{1}(y'), \nkAP^{3}(y'),\ldots,\nkAP^{k}(y'))-\psi(\nkAP^{1}(y), \nkAP^{3}(y),\ldots,\nkAP^{k}(y))]\bigg|
\le \frac{C_{k,p}}{n^{1/2}}\]
where the constant $C_{k,p}$ is linearly proportional to $C$.
\end{theorem}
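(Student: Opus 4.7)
The plan is to apply the multilinear invariance principle stated immediately above with the $d = k-1$ formal multilinear polynomials $F^{(1)} = \nkAP^1, F^{(2)} = \nkAP^3, F^{(3)} = \nkAP^4, \ldots, F^{(k-1)} = \nkAP^k$, each of which has degree at most $k$ and is genuinely multilinear by the hypothesis $\gcd(n,(k-1)!)=1$. First I would verify that both the standard normals $y_i'$ and the normalized $p$-biased bits $y_i$ satisfy $(2,3,\rho_p)$-hypercontractivity for a constant $\rho_p>0$ depending only on $p$: on the Bernoulli side one invokes the preceding hypercontractivity theorem with $\lambda = \min(p,1-p)$, on the Gaussian side one uses the classical constant, and then one takes the smaller of the two so the hypotheses of the invariance principle are met with a common $\rho_p$.

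With hypercontractivity in hand, the invariance principle reduces the statement to the influence bound
\[\sum_{t=1}^n \sum_{\ell \in \{1,3,4,\ldots,k\}} \textbf{Inf}_t[\nkAP^\ell]^{3/2} \lesssim_{k,p} \frac{1}{n^{1/2}}.\]
For each fixed $\ell$, I would observe that the coefficient of a multilinear monomial indexed by an $\ell$-subset $T \subseteq \ZZ/n\ZZ$ in $\nkAP^\ell$ equals $1/\sigma_\ell$ times the number of triples $(a,d,S)$ for which $\{a+id : i \in S\} = T$; an elementary counting argument shows this multiplicity is $O_k(1)$, so each coefficient has size $O_k(1/\sigma_\ell)$. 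The number of such monomials containing a fixed variable $y_t$ is $O_k(n)$, obtained by choosing the position $i \in S$ that $t$ occupies in the AP, the common difference $d$, and the subset $S$, after which $a = t - id$ is forced. Combining gives $\textbf{Inf}_t[\nkAP^\ell] = O_k(n/\sigma_\ell^2)$.

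Substituting the asymptotics $\sigma_1 = \Theta_k(n^{3/2})$ and $\sigma_\ell = \Theta_k(n)$ for $\ell \geq 3$ recorded in the preceding subsection yields $\textbf{Inf}_t[\nkAP^1] = O_k(n^{-2})$ and $\textbf{Inf}_t[\nkAP^\ell] = O_k(n^{-1})$ for $\ell \ge 3$. The $\ell = 1$ block contributes $n \cdot (n^{-2})^{3/2} = n^{-2}$ to the influence sum, while each $\ell \geq 3$ block contributes $n \cdot (n^{-1})^{3/2} = n^{-1/2}$, totalling $O_k(n^{-1/2})$, which becomes the claimed $C_{k,p}/\sqrt n$ after absorbing the prefactor $C(k-1)^2/(3\rho_p^{3k})$ from the invariance principle into $C_{k,p}$; the linear dependence on $C$ is inherited directly from the invariance principle. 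I do not anticipate a genuine obstacle. The only step that requires real care is the multiplicity count entering the coefficient estimate, where one must use the coprimality hypothesis to ensure that coincidences among distinct triples $(a,d,S)$ producing the same monomial occur only a bounded number of times, so that after collapsing to the honestly multilinear form each coefficient retains the size $O_k(1/\sigma_\ell)$ used above.
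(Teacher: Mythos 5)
Your approach matches the paper's exactly: apply the cited multilinear invariance principle with the $k-1$ polynomials $\nkAP^1,\nkAP^3,\ldots,\nkAP^k$, verify $(2,3,\rho)$-hypercontractivity for both the normalized $p$-biased bits and the Gaussians, and then bound the influence sum. The final bound is correct. However, your influence computation for the degree-$1$ block contains an error that happens to be harmless here but should be flagged.

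You claim the multiplicity of each monomial is $O_k(1)$ uniformly in $\ell$, but for $\ell = 1$ this is false: the singleton $T = \{t\}$ arises from a triple $(a,d,\{i\})$ whenever $a + id = t$, and for each of the $k$ choices of $i$ every $d \in [n/2]$ is admissible (with $a = t - id$ forced), so the multiplicity is $\Theta_k(n)$. Correspondingly, the number of distinct monomials containing $y_t$ in $\nkAP^1$ is exactly one, not $O_k(n)$ as your counting suggests. The two miscounts combine to give the wrong influence: the coefficient of $y_t$ in $\nkAP^1$ is $\Theta_k(n/\sigma_1) = \Theta_k(n^{-1/2})$, so $\textbf{Inf}_t[\nkAP^1] = \Theta_k(n^{-1})$, not the $O_k(n^{-2})$ you state. (One can also read this off directly from the normalization: $\nkAP^1$ is a linear form $\sum_t c y_t$ with all coefficients equal by translation invariance, and $nc^2 = 1$ forces $c^2 = 1/n$.) Since the invariance-principle error term depends on coefficients through their squares, the swap of ``$n$ copies of a small coefficient'' for ``one copy of a large coefficient'' genuinely changes the influence by a factor of $n$; you cannot interchange them freely. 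Fortunately this correction makes the $\ell=1$ block contribute $n \cdot (n^{-1})^{3/2} = n^{-1/2}$, the same order as the $\ell \ge 3$ blocks, so your conclusion $\sum_t \sum_j \textbf{Inf}_t[F^{(j)}]^{3/2} = O_k(n^{-1/2})$ and the resulting $C_{k,p}/n^{1/2}$ bound still stand; this also matches the paper's one-line statement that $\textbf{Inf}_t[F^{(j)}] = O_{k,p}(1/n)$ for every $j$.
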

Here we used that $\textbf{Inf}_t[F^{(j)}] = O_{k,p}(1/n)$, which easily follows from the asymptotics of $\sigma_\ell$ along with the symmetry among the variables.

\section{Exchangeable Pairs}\label{sec:exchangeable-pairs}
We now consider the joint distribution of $\{\nkAP^{\ell}(y_i)\}_{1, 3\le \ell \le k}$ where the $y_i$ are now independent standard normals. We prove a quantitative result regarding its convergence to a multivariate normal using an application of exchangeable pairs. In order to state the version of exchange pairs, we will need we will first define a set of notations. Define for a real matrix 
\[\langle A, B\rangle = \Tr(A^TB)\]
and 
\[\|A\|_{HS} = \sqrt{\Tr(A^TA)} = \sqrt{\langle A, A\rangle}.\]
Furthermore define
\[\|A\|_{\text{op}} = \sup_{|v|=1,|w|=1}|\langle Av, w\rangle|\]
for matrices and similar for $k$-order forms 
\[\|A\|_{\text{op}} = \sup_{|v_i|=1}|A(v_1,v_2,\ldots,v_k)|.\] Given this define the $k^{\text{th}}$ derivative (tensor) operators for $f\in C^{k}(\mathbb{R}^n)$ as 
\[\langle D^kf(x),(u_1,\ldots,u_k)\rangle = \sum_{i_1,i_2,\ldots,i_k\in [n]}\frac{\partial^k f}{\partial x_{i_1}\ldots \partial x_{i_k}}(u_1)_{i_1}\ldots(u_k)_{i_k}\]
for vectors $u_1,\ldots,u_k\in\RR^n$. Finally define
\[M_r(g) = \sup_{x\in\mathbb{R}^n} \|D^rg(x)\|_{\text{op}}.\]
The last notion we need is that of exchangeable random variables. 
\begin{definition}
$X'$ and $X$ are exchangeable random variables if $(X',X)$ and $(X,X')$ have the same distribution.
\end{definition}
The key probability theoretic statement we will use is a multivariate version of exchangeable random variables for proving convergence to a Gaussian which in this form is due to Meckes \cite{M09}.
\begin{theorem}[\cite{M09}]\label{thm:exchangeable}
Let $(X,X')$ be an exchangeable pair of random vectors in $\RR^d$.  Suppose that
there is an invertible matrix $\Lambda$, and a 
random matrix $E'$ such that
\begin{itemize}
\item $\EE\left[X'-X\big|X\right]=-\Lambda X$
\item $\EE\left[(X'-X)(X'-X)^T\big|X\right]=2\Lambda+\EE\left[E'\big|X
\right].$
\end{itemize}
Then for $g\in C^3(\RR^d)$,
\begin{equation}
\big|\EE g(X)-\EE g(Z)\big|\le\|\Lambda^{-1}\|_{\text{op}}\left[
 \frac{\sqrt{d}}{4}M_2(g)
\EE\|E'\|_{HS}
+\frac{1}{9}M_3(g)\EE|X'-X|^3\right]
\end{equation}
where $Z$ is a standard Gaussian random vector in $\RR^d$.
\end{theorem}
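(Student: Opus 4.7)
The plan is to apply Stein's method for multivariate normal approximation using the Ornstein--Uhlenbeck generator, combined with an antisymmetric statistic on the exchangeable pair. The argument has three ingredients: solving the Stein equation, extracting $\EE\mc{L}f(X)$ from the two hypotheses via antisymmetry, and bounding the resulting error in Hilbert--Schmidt norm.

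First I would solve the Stein equation. Let $\mc{L}f(x) = \Delta f(x) - x \cdot \nabla f(x)$ be the generator of the OU semigroup $P_t g(x) = \EE g\bigl(e^{-t}x + \sqrt{1-e^{-2t}}Z\bigr)$, whose stationary measure is the standard Gaussian on $\RR^d$. For $g\in C^3(\RR^d)$ the function
\[f(x) = -\int_0^\infty \bigl(P_t g(x) - \EE g(Z)\bigr)\, dt\]
satisfies $\mc{L}f = g - \EE g(Z)$, so $\EE g(X) - \EE g(Z) = \EE \mc{L}f(X)$. Differentiating under the integral and pulling an $e^{-rt}$ out of the $r$-th derivative of $P_t g$ yields the \emph{Stein factor} estimates $M_r(f) \le M_r(g)/r$, which ultimately account for the numerical constants in the conclusion.

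Next I would exploit exchangeability through an antisymmetric statistic. The key observation is that
\[h(X,X') = \bigl(\nabla f(X) + \nabla f(X')\bigr)^T \Lambda^{-1}(X' - X)\]
reverses sign under the swap $(X,X') \leftrightarrow (X',X)$, so exchangeability forces $\EE h(X,X') = 0$. Taylor expanding $\nabla f(X') = \nabla f(X) + \nabla^2 f(X)(X'-X) + R$, with remainder controlled by $\tfrac12 M_3(f)|X'-X|^2$, and taking expectations conditional on $X$, the first hypothesis $\EE[X'-X\mid X] = -\Lambda X$ converts the linear term to $-2\,\EE[X \cdot \nabla f(X)]$, while the second hypothesis together with $v^T H v = \tr(Hvv^T)$ converts the quadratic term to $2\,\EE\Delta f(X) + \EE[\tr(\Lambda^{-1}E'\,\nabla^2 f(X))]$. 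Rearranging yields
\[\EE \mc{L}f(X) = -\tfrac12\,\EE\bigl[\tr(\Lambda^{-1}E'\,\nabla^2 f(X))\bigr] + O\!\left(\|\Lambda^{-1}\|_{\op}\,M_3(f)\,\EE|X'-X|^3\right).\]

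Finally I would bound each piece in the appropriate norm. Cauchy--Schwarz in the Hilbert--Schmidt inner product gives $|\tr(\Lambda^{-1}E'\,\nabla^2 f(X))| \le \|\Lambda^{-1}\|_{\op}\,\|E'\|_{\HS}\,\|\nabla^2 f(X)\|_{\HS} \le \sqrt{d}\,\|\Lambda^{-1}\|_{\op}\,\|E'\|_{\HS}\,M_2(f)$, while the Taylor remainder contributes $\|\Lambda^{-1}\|_{\op}\,M_3(f)\,\EE|X'-X|^3$ up to an explicit constant. Combining with the Stein factors $M_2(f) \le M_2(g)/2$ and $M_3(f) \le M_3(g)/3$ produces an inequality of the stated form. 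The main technical subtlety is controlling the multivariate Taylor remainder by the operator norm of the third-derivative tensor (rather than a cruder Frobenius proxy), since only then does $M_3(g)$ enter the bound at the strength needed downstream for the application to $\kAP$; apart from this bookkeeping, the antisymmetry step and the semigroup computation are routine once the Stein equation is in place.
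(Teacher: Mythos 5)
This theorem is quoted from Meckes \cite{M09}; the paper does not prove it, so there is no in-paper argument to compare against. Your plan is essentially Meckes' original proof: solve the Stein equation via the Ornstein--Uhlenbeck semigroup, pull the Stein factors $M_r(f)\le M_r(g)/r$ out of the commutation $D^r P_t g = e^{-rt}P_t D^r g$, kill the antisymmetric statistic $(\nabla f(X)+\nabla f(X'))^T\Lambda^{-1}(X'-X)$ by exchangeability, Taylor-expand to expose $\EE\mathcal{L}f(X)$, and bound the trace term by Cauchy--Schwarz in Hilbert--Schmidt norm plus the crude $\|\cdot\|_{\HS}\le\sqrt{d}\,\|\cdot\|_{\op}$. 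All the reductions you write down are correct (the antisymmetry really does cancel to zero, the second hypothesis really does convert the quadratic term into $2\EE\Delta f(X)$ plus the $E'$-trace, and tracking the Taylor remainder by the operator norm of $D^3 f$ is exactly the point); your bookkeeping in fact yields a remainder constant $\tfrac{1}{12}$, which is sharper than the stated $\tfrac{1}{9}$ and hence still proves the inequality as written.
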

Note this is a simplification of the statement which appears in \cite{M09} which is sufficient for our purposes. We now apply this to our setting where we set 
$y = (y_1,\ldots,y_n)$ and $y' = (y_1,\ldots, y_I',\ldots, y_n)$ where $I$ is a uniformly random coordinate in $[n]$ and $y_i, y_I'$ are independent standard normals. It obvious by definition that 
\[W = \{\nkAP^{\ell}(y)\}_{\ell=1, 3\le \ell \le k}, W' = \{\nkAP^{\ell}(y')\}_{\ell=1, 3\le \ell \le k}\]
are exchangeable random variables. We stress that the $\ell = 2$ term is missing. Furthermore, our normalization of $\sigma_\ell$ has made it so that each coordinate of $W$ has variance $1$. Also, $\EE[W] = 0$ since $\nkAP^{\ell}(y)$ is multi-linear as we have $\gcd(n,(k-1)!) = 1$. We first compute the matrix $\Lambda$ in the case of $(W,W')$.
\begin{proposition}\label{prop:exchange}
Let $W$, $W'$ be defined as above. Then 
\[\EE[W'-W|W] = -\mathbf{diag}\bigg(\frac{i}{n}\bigg)_{i=1, 3\le i\le k} W.\]
\end{proposition}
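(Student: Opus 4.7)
The plan is to exploit the multilinearity of each $\nkAP^{\ell}$, combined with the way resampling a single Gaussian coordinate interacts with a homogeneous multilinear polynomial. Recall that $y' = (y_1,\ldots,y_{I-1},y_I',y_{I+1},\ldots,y_n)$, where $I$ is uniform on $[n]$ and $y_I'$ is an independent standard normal. Condition first on the pair $(y,I)$.

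Because each $\nkAP^{\ell}$ is multilinear (which holds since $\gcd(n,(k-1)!)=1$), for every coordinate $i$ we may write
\[\nkAP^{\ell}(y) = y_i\cdot \partial_i\nkAP^{\ell}(y) + R_i^{(\ell)}(y),\]
where the partial derivative $\partial_i\nkAP^{\ell}(y)$ does not depend on $y_i$ and $R_i^{(\ell)}$ is the restriction obtained by setting $y_i = 0$. Consequently,
\[\nkAP^{\ell}(y') - \nkAP^{\ell}(y) = (y_I' - y_I)\,\partial_I\nkAP^{\ell}(y).\]
Taking conditional expectation with respect to $(y,I)$ and using $\EE[y_I'] = 0$ gives $\EE[\nkAP^{\ell}(y')-\nkAP^{\ell}(y)\mid y,I] = -y_I\,\partial_I\nkAP^{\ell}(y)$.

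Now average over $I$, which is uniform on $[n]$ and independent of $y$:
\[\EE\bigl[\nkAP^{\ell}(y')-\nkAP^{\ell}(y)\,\big|\, y\bigr] = -\frac{1}{n}\sum_{i=1}^{n} y_i\,\partial_i\nkAP^{\ell}(y).\]
Since $\nkAP^{\ell}$ is homogeneous of degree $\ell$, Euler's identity yields $\sum_i y_i\,\partial_i\nkAP^{\ell}(y) = \ell\cdot \nkAP^{\ell}(y)$, and hence $\EE[\nkAP^{\ell}(y')-\nkAP^{\ell}(y)\mid y] = -(\ell/n)\,\nkAP^{\ell}(y)$. Assembling the coordinates indexed by $\ell\in\{1\}\cup\{3,\ldots,k\}$ we obtain the componentwise identity
\[\EE[W'-W\mid y] = -\mathbf{diag}\bigl(i/n\bigr)_{i=1,\,3\le i\le k}\, W.\]

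Finally, since $W$ is a function of $y$, conditioning further on $W$ and applying the tower property produces the stated equality $\EE[W'-W\mid W] = -\mathbf{diag}(i/n)_{i=1,\,3\le i\le k}\,W$. There is no real obstacle: the only subtlety is verifying that $\nkAP^{\ell}$ is genuinely multilinear of exact degree $\ell$ in the Gaussians, which is where the hypothesis $\gcd(n,(k-1)!)=1$ is used (it guarantees that no two of the $k$ coordinates $a+id$ collide mod $n$, so each monomial in the expansion is a squarefree product of degree exactly $\ell$), allowing a clean application of Euler's homogeneity identity.
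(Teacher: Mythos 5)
Your proof is correct and follows essentially the same route as the paper: condition on $(y,I)$, use multilinearity plus $\EE[y_I']=0$ to reduce the resampled expectation to dropping the terms containing $y_I$, average over $I$, and use the tower property. The only stylistic difference is that you package the final count via Euler's homogeneity identity $\sum_i y_i\,\partial_i\nkAP^{\ell}(y)=\ell\,\nkAP^{\ell}(y)$, whereas the paper makes the equivalent observation directly by noting that each degree-$\ell$ monomial is hit by exactly $\ell$ of the indices $m$.
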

\begin{proof}
Note that the $\ell^{\text{th}}$ coordinate of $\EE[W'-W|y]$ is 
\begin{align*}
\frac{1}{n}\sum_{m=1}^{n}&\EE[\nkAP^{\ell}(y_1,\ldots,y_m',y_{m+1},\ldots)-\nkAP^{\ell}(y)|y]\\
&= \frac{1}{n}\sum_{m=1}^{n}\EE[\nkAP^{\ell}(y_1,\ldots,0,y_{m+1},\ldots)-\nkAP^{\ell}(y)|y]\\
&= -\frac{1}{n} \EE[\sum_{m=1}^{n}\frac{1}{\sigma_\ell} \sum_{a\in \ZZ/n\ZZ}\sum_{d\in [n/2]} \sum_{S\in {\binom{[k]}{\ell}}]}\prod_{i \in S, m\in a + dS} y_{a+id}|y]\\
&= -\frac{1}{n} \bigg(\EE[\ell(\nkAP^{\ell}(y))|y]\bigg) = -\frac{\ell}{n}\bigg(\nkAP^{\ell}(y)\bigg)
\end{align*}
and the proposition follows upon taking a conditional expectation with respect to $W$. The first equality follows because, conditional on the index $I = m$ which was removed, $y_m'$ is independent from everything else and has mean zero and $\nkAP^\ell$ is multilinear.
\end{proof}
Now to apply \cref{thm:exchangeable} we will simply take
\[E' = \EE[(W'-W)(W'-W)^T-2\Lambda|y],\]
which clearly satisfies the necessary hypothesis. To apply \cref{thm:exchangeable} to $W, W'$, we now see it suffices to bound $\EE[\|E'\|_{HS}]$ and $\EE|W'-W|^3$. Noting that $\|\Lambda^{-1}\|_{\text{op}} = n$, we will want bounds of the form $O_k(n^{-\frac{3}{2}})$ for each of these two quantities.

It is worth studying the diagonal terms more carefully. We have
\begin{align*}
\EE[(W_\ell'-W_\ell)^2] &= \EE[2W_\ell^2-2W_\ell W_\ell'] = \EE[2W_\ell\EE[W_\ell-W_\ell'|W_\ell]] = \frac{2\ell}{n}\EE[W_\ell^2] = \frac{2\ell}{n}
\end{align*}
by exchangeability, conditional expectations, \cref{prop:exchange}, and the normalization of $W_\ell$. Therefore
\begin{align*}
(E')_{\ell,\ell} &= \EE[(W_\ell'-W_\ell)^2|y] - \EE[(W_\ell'-W_\ell)^2]
\end{align*}
for $1\le\ell\le k$ and $\ell\neq 2$.

\subsection*{Computing $E'$}\label{subsec:computing-E-prime}
We begin by simply computing $E'$ entry by entry. For this we define the further refinement
\[\nkAP^{\ell,t}(y) = \frac{1}{y_t\sigma_\ell} \sum_{a\in \ZZ/n\ZZ}\sum_{d\in [n/2]} \sum_{S\in {\binom{[k]}{\ell}}}\prod_{i \in S, t\in a + d S} y_{a+id}.\]
Note that the above in theory is not defined when $y_t$ is $0$, but really we are simply removing the term $y_t$ from all products in the summation so this can be extended in the obvious way. Less formally this is the sum $\nkAP^\ell$ with all the terms containing the term $y_{t}$ with $y_{t}$ factored out. Using this notation it follows easily that the nondiagonal entries are  
\[(E')_{i,j} = \frac{1}{n}\sum_{s\in [n]} \nkAP^{i,s}(y) \nkAP^{j,s}(y)(y_s^2 + 1)\]
and the diagonal entries are 
\[(E')_{i,i} = -\frac{2i}{n} +  \frac{1}{n}\sum_{s\in [n]} (y_s^2 + 1)\nkAP^{i,s}(y)^2.\]
Here we used that each random variable $y_i$ has mean zero and variance one, and same for its replacement $y_I'$. In fact, the normalization of $\sigma_\ell$ implies that $\EE[(E')_{i,i}] = 0$, although this is not obvious by direct computation. We can see this from cross-comparison with the earlier expression for $(E')_{\ell,\ell}$.

We now proceed further into the computational abyss and consider 
\[\tr(E'E'^{T}) = \sum_{i,j} (E')_{i,j}^2.\]
We will need a bound on 
\[\EE\bigg[\sqrt{\tr(E'E'^{T})}\bigg]\le\sqrt{\EE[\tr(E'E'^{T})]}\]
which has decay properties of the form $O_k(n^{-3/2})$. Thus it suffices to prove a bound of the form $O_k(n^{-3})$ for
\[\EE[\sum_{i,j} (E')_{i,j}^2].\]
Note that it suffices to prove such a bound for each individual summands as there are $k^2$ such summands and the result will follow.

\section{Bounding $E'$}\label{sec:bounding-E-prime}
\subsection{Non-diagonal terms}\label{subsec:non-diagonal-terms}
We will first consider the case where $i\neq j$. Without loss of generality let $i < j$. Thus, since $i\ge 1$ and $j\neq 2$, we have $j\ge 3$.

We first define an even further refinement of our polynomials,
\[\nkAP^{\ell,t,S}(y) = \frac{1}{y_t\sigma_\ell} \sum_{a\in \ZZ/n\ZZ}\sum_{d\in [n/2]}\prod_{i \in S, t\in a + d S} y_{a+id}.\]
Note that
\[(E')_{i,j} = \frac{1}{n}\sum_{S\in \binom{[k]}{i}, T\in \binom{[k]}{j}}
\sum_{v\in [n]} \nkAP^{i,v,S}(y)\nkAP^{j,v,T}(y)(y_v^2+1)\]
and thus it suffices to prove that for all $S\in\binom{[k]}{i}$ and $T\in\binom{[k]}{j}$ we have
\[\EE\bigg[\bigg(\frac{1}{n} \sum_{v\in [n]} \nkAP^{i,v,S}(y)\nkAP^{j,v,T}(y)(y_v^2+1))\bigg)^2\bigg] = O_k(n^{-3})\]
by Cauchy-Schwarz. Using Cauchy-Schwarz again, it suffices to instead prove that
\[\EE\bigg[\bigg(\sum_{v\in [n]} \nkAP^{i,v,S}(y)\nkAP^{j,v,T}(y)\bigg)^2+
\bigg(\sum_{v\in [n]} \nkAP^{i,v,S}(y)\nkAP^{j,v,T}(y)y_v^2\bigg)^2\bigg] = O_k(n^{-1}).\]
The key idea is that when expanded as a polynomial in the $y$'s, the inside of the expectation will have most terms contain an odd power of some $y_i$, which leads to a zero contribution as $y_i$ is a standard normal. Every nonzero term contributes an amount bounded by $O_k(1)\cdot(\sigma_i\sigma_j)^{-2}$ as the exponents are bounded. Note that the parities of the exponents are unchanged between the first and second terms, so we use the second out of convenience.

A term of the second, when expanded out, amounts to choosing $v_1, v_2\in [n]$ and then $a_{11},a_{12},a_{21},a_{22}\in\ZZ/n\ZZ$ and $d_{11},d_{12},d_{21},d_{22}\in [n/2]$ such that $v_c\in a_{c1}+d_{c1}S$ and $v_c\in a_{c2}+d_{c2}T$ for $c\in\{1,2\}$. Let $A_{ct}$ for $k,t\in\{1,2\}$ be the sets thus formed (there are no self-intersections since $\gcd(n,(k-1)!) = 1$). Let the multiset $A$ equal the union, with repetition, of all $A_{ct}$. Note $|A_{c1}| = i$ and $|A_{c2}| = j$, so that $|A| = 2(i+j)$.

\begin{claim}\label{claim:small}
Given an initial choice of three of the $(a_{ct},d_{ct})$, there are $O_k(1)$ ways to choose the remaining pair so as to be compliant with the condition that every element in $A$ appears with even parity.
\end{claim}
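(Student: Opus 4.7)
The plan is to translate the parity constraint into a support-matching problem for the fourth AP, and then to exploit the hypothesis $\gcd(n,(k-1)!)=1$ to argue that an AP with a prescribed support set of size at least two is rigid. Specifically, let $B$ denote the multiset union of the three fixed APs, and let $O(B)$ denote the set of elements with odd multiplicity in $B$. Because the $\gcd$ condition forces every AP of length at most $k$ to have $|U|$ distinct elements in $\ZZ/n\ZZ$, the fourth AP contributes multiplicities in $\{0,1\}$ only. Hence requiring every element of $A = B\cup C$ (where $C$ is the multiset of the fourth AP) to have even multiplicity is equivalent to demanding $\operatorname{support}(C)=O(B)$. In particular, if $|O(B)|$ does not match the size of the fourth AP's position set then there are zero valid choices.

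Assume next that the fourth AP has position set $U$ with $|U|\ge 2$. For each bijection $\phi\colon U\to O(B)$ I would ask whether it is realized by some $(a,d)\in\ZZ/n\ZZ\times [n/2]$ with $a+ud=\phi(u)$ for all $u\in U$. Fixing two distinct positions $u_1\ne u_2\in U$, the equations at $u_1,u_2$ force $(u_1-u_2)d=\phi(u_1)-\phi(u_2)$, and since $1\le|u_1-u_2|\le k-1$ is coprime to $n$ the quantity $u_1-u_2$ is invertible modulo $n$. So $d$ is uniquely determined modulo $n$ (and the constraint $d\in[n/2]$ costs at most a factor of two), which in turn forces $a$; the remaining positions $u\in U$ then yield consistency checks that either hold or fail. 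Since there are at most $|U|!\le k!$ bijections to consider, there are at most $O_k(1)$ valid pairs $(a,d)$, establishing the claim in this case.

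The one genuinely degenerate case is $|U|=1$, which can occur when $i=1$ and the chosen variable pair is $S$-type of size one; here the parity and the $v_c$-containment constraints together still leave $n/2$ admissible values of $d$. The natural remedy is to recall that $j\ge 3$, so of the four pairs at least two are $T$-type with position sets of size $\ge 2$; one can always arrange for the variable pair to be such a $T$-type pair, placing us back into the $|U|\ge 2$ case above. I expect the main (minor) obstacle to be bookkeeping: carefully justifying the choice of which pair to leave free in each subcase, and checking that the invertibility afforded by $\gcd(n,(k-1)!)=1$ is applied to differences that indeed lie in $\pm[k-1]$.
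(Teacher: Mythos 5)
Your argument is the same as the paper's: parity determines what support the remaining AP must have, and coprimality of $n$ with $(k-1)!$ makes the parametrization $(a,d)\mapsto a+dU$ rigid once two of its values are specified, leaving at most $O_k(1)$ compatible pairs (at most $|U|!$ target assignments, each pinning down $d$ up to the $[n/2]$ sign convention). You have also spotted a degeneracy the paper's one-line proof glosses over: if the remaining pair is $S$-type with $i=|S|=1$, its support is $\{v_c\}$ for every choice of $d$, so the claim as literally stated would give $\Theta(n)$ choices rather than $O_k(1)$. In the paper's actual application this never bites---the claim is only invoked after the subcase where $A_{11},A_{12}$ meet only at $v_1$ (respectively $A_{21},A_{22}$ only at $v_2$) has been handled separately, and when $i=1$ that subcase always applies---but your workaround of always leaving a $T$-type pair free is an equally valid fix and the observation is worth flagging.
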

\begin{proof}
After canceling we can see what parities the remaining set must have at each value of $\ZZ/n\ZZ$, which precisely determines it. (Recall that each is a set rather than a multiset because $\gcd(n,(k-1)!) = 1$.) Then there are $O_k(1)$ ways to choose the $(a_{ct},d_{ct})$ given what the set must be.
\end{proof}

First consider the case where $A_{11},A_{12}$ do not intersect at a place other than $v_1$. This means they together hit $i+j-2$ distinct values once, and $v_1$ twice. We see this immediately implies that $A_{21}$ and $A_{22}$, after removing $v_2$ from each, must hit precisely these $i+j-2$ distinct values. Since $j\ge 3$, this means that the value of $v_2$ is determined up to $O_k(1)$ choices by looking at the possible ways $A_{22}$ hits these $i+j-2$ values in $j-1\ge 2$ places. Therefore we see there are $O_k(1)$ choices of $a_{21},d_{21},a_{22},d_{22}$ after selecting $v_1,d_{11},d_{12}$, which means we have $O(n)^3\cdot O_k(1)$ total choices (as there are $O_k(1)$ choices of $a_{11},a_{12}$ given that information).

The analysis is similar if $A_{21},A_{22}$ do not intersect other than at $v_2$. So now we consider the case where $A_{11},A_{12}$ intersect at a place other than $v_1$ and same for $A_{21},A_{22}$. After choosing $v_1,v_2$ with $O(n^2)$ choices, we claim there are $O_k(1)$ ways to finish choosing. First note that $A_{11},A_{12}$ have $O_k(n)$ possibilities together, and after that there are $O_k(1)$ choices for whichever of $A_{21},A_{22}$ intersects $A_{11}\cup A_{12}$ (which must happen as these sets $A_{11}, A_{12}$ cannot cancel each other out, being of differing size). And by \cref{claim:small} there are $O_k(1)$ ways to choose the last one. This gives $O_k(n^3)$ once more.

In total, we have $O_k(n^3)$ terms that are not zero in the expectation. This yields a total contribution of $O_k(n^3)\cdot (\sigma_i\sigma_j)^{-2} = O_k(n^{-1})$, as desired.

\subsection{Diagonal terms}\label{subsec:diagonal-terms}
Now we consider the diagonal terms $(E_{\ell,\ell})'$. From \cref{sec:exchangeable-pairs}, we have
\begin{align*}
\EE[(E')_{\ell,\ell}^2] = \EE[(\EE[(W_\ell'-W_\ell)^2|y]-\EE[(W_\ell'-W_\ell)^2])^2] = \Var(\EE[(W_\ell'-W_\ell)^2|y]),
\end{align*}
where the variance in the final line is over the randomness of the standard normals $y$. Now
\begin{align*}
\EE[(W_\ell'-W_\ell)^2|y] = \EE_{I,y'}[(y_I'-y_I)^2\nkAP^{\ell,I}(y)^2] = \frac{1}{n}\sum_{i=1}^n(1+y_i^2)\nkAP^{\ell,i}(y)^2.
\end{align*}
If $\ell = 1$, $\nkAP^{\ell,i}(y)$ is a constant of size $\Theta_k(n^{-\frac{1}{2}})$. We end up with a bound of quality $O_k(n^{-3})$ trivially. Now let $\ell\ge 3$, recalling $\ell\neq 2$. We have
\begin{align*}
\EE[(E')_{\ell,\ell}^2] = \frac{1}{n^2\sigma_\ell^4}\sum_{1\le i,j\le n}\Cov\left[(1+y_i^2)(\sigma_\ell\nkAP^{\ell,i}(y))^2,(1+y_j^2)(\sigma_\ell\nkAP^{\ell,j}(y))^2\right].
\end{align*}

We first deal with the $i = j$ terms. They are bounded by $\EE[(1+y_i^2)^2(\sigma_\ell\nkAP^{\ell,i}(y))^4]$. Note that this value is independent of $i$, so we let $i = 0$ (which is the same as $i = n$). We adopt a similar method as before. It suffices to show that there are $O_k(n^2)$ terms of this $i = 0$ value that, when expanded, yield a nonzero expectation value. This is since there are $n$ such terms and as $\sigma_\ell = \Theta_k(n)$, which would lead to an overall contribution of the desired size $O_k(n^{-3})$ to $\EE[(E')_{\ell\ell}^2]$.

We use the $\nkAP^{\ell,i,S}$ refinement from before. By H\"older's inequality, it suffices to bound each $\EE[(1+y_0^2)^2(\sigma_\ell\nkAP^{\ell,0,S}(y))^4]$. We also can choose an exponent of $y_0$ from the initial term, but it does not affect parities of exponents and is of constant order so we ignore this and assume we have the $y_0^4$ term for simplicity.

Each term within this sum is chosen via $d_1,\ldots,d_4\in [n/2]$ and offsets (e.g. which term equals the $y_0$ term that is being divided in $\nkAP^{\ell,0,S}$), each of which contributes terms of the form $y_{d_jS_j}$, where $S_j$ is one of $O_k(1)$ many shifts of $S$ that contains $0$.  Now for any valid tuple $(d_1,\ldots,d_6)$, make a graph on vertex set $\{1,\ldots,6\}$, with $i,j$ connected if $d_iS_i$ and $d_jS_j$ intersect other than at $0$.

Given such a graph, we claim that there are $O_k(n)$ ways to choose the $d$s associated to a connected component of this graph in a manner compatible with the graph. Indeed, we find that, for example, if $d_1,d_2,d_3$ are connected, then choosing $d_1$ will fix the value say of $cd_2$ for some $c\in [k]$, which yields $O_k(1)$ possible values of $d_2$, and then $O_k(1)$ possible values of $d_3$ similarly.

Furthermore, in a graph associated to a valid tuple $(d_1,\ldots,d_6)$, i.e., one with even powers of the $y$s, we must have no disconnected vertices. Indeed, since the vertex is disconnected, the associated value of $d_i$ must give rise to a multiset $d_iS\pmod{n}$ which must cancel out all of its contributions to the $y$s. This is impossible as $\gcd(n,(k-1)!) = 1$.

Finally, we have at most $\frac{4}{2} = 2$ connected components of non-isolated vertices, each of which have $O_k(n)$ ways to choose the $d$s. This yields an upper bound of $O_k(n^2)$, as desired.

Now we consider $i\neq j$. Again, by translation invariance we see the value only depends on $j - i$. Therefore it suffices to show for $i\neq 0$ that
\[\Cov\left[(1+y_0^2)(\sigma_\ell\nkAP^{\ell,0}(y))^2,(1+y_i^2)(\sigma_\ell\nkAP^{\ell,i}(y))^2\right] = O_k(n)\]
since there are around $n^2$ total terms. We will show that
\[\Cov\left[(\sigma_\ell y_0\nkAP^{\ell,0}(y))^2,(\sigma_\ell y_i\nkAP^{\ell,i}(y))^2\right] = O_k(n).\]
Again, the other four cases that need to be verified will be essentially identical, since we are just removing even exponent terms. Consider
\begin{align*}
\EE[(\sigma_\ell y_0\nkAP^{\ell,0}(y))^2(\sigma_\ell y_i\nkAP^{\ell,i}(y))^2] &= \sum_{S,T\in\binom{[k]}{\ell}^2}\sum_{a,b\in(\ZZ/n\ZZ)^2}\sum_{\substack{d,e\in[n/2]^2\\0\in a_j+d_jS_j\\i\in b_j+e_jT_j}}\EE\left[\prod_{j=1}^2\prod_{s_j\in S_j} y_{a_j+d_js_j}\prod_{t_j\in T_j}y_{b_j+e_jT_j}\right].
\end{align*}

We claim that the amount of nonzero terms other than those with $a_1+d_1S_1=a_2+d_2S_2 = A$ and $b_1+e_1T_1=b_2+e_2T_2 = B$ and $A\cap B = \emptyset$ is $O_k(n)$. Indeed, this is a similar argument as in the non-diagonal term case. The analogue of \cref{claim:small} immediately follows, and similar arguments to earlier deal with the case that $a_1+d_1S_1,a_2+d_2S_2$ intersect only at $0$. Indeed, if so, then they must hit $2\ell-2$ distinct values once and $0$ twice. Then $b_1+e_1T_1,b_2+e_2T_2$, after removing $j$ from each, must precisely hit those $2\ell-2$ values, each hitting $\ell-1$ of them. As $\ell\ge 3$, we see that this pins down what $b_1,e_1,b_2,e_2$ are up to $O_k(1)$ possibilities, as linear combinations of the $d_i$ (say after fixing which positions of $a_i+d_iS_i$ equal $0$). Furthermore, we can compute $j$ as a linear combination of the $d_i$. Since $j\neq 0$, it must be a nonzero linear combination. This pins down $(d_1,d_2)$ to $O_k(n)$ possible values. Thus we have $O_k(n)\cdot O_k(1) = O_k(n)$ total possibilities. Furthermore, we have a similar analysis for the case where $b_1+e_1T_1,b_2+e_2T_2$ intersect only at $j$.

Now we consider the case where $a_1+d_1S_1,a_2+d_2S_2$ intersect other than at $0$, and similar for $b_i+e_iT_i$. Then there are $O_k(n)$ choices for $(a_1,d_1)$, and then $O_k(1)$ choices for $(a_2,d_2)$, and if we are assuming $a_1+d_1S_1\neq a_2+d_2S_2$, the terms $b_i+e_iT_i$ must hit one of these. There are thus $O_k(1)$ possibilities for the value of the set that does hit these values, and then $O_k(1)$ possibilities for the other set. Thus we have $O_k(n)$ choices once more, under the hypothesis that $a_1+d_1S_1\neq a_2+d_2S_2$. Similar analysis holds if instead we assume $b_1+e_1T_1\neq b_2+e_2T_2$. In the case where neither holds, so that $a_1+d_1S_1 = a_2+d_2S_2 = A$ and $b_1+e_1T_1 = b_2+e_2T_2 = B$, if we assume that $A\cap B\neq\emptyset$ then again we have a bound of $O_k(n)$,

The only remaining terms are those that have $a_1+d_1S_1 = a_2+d_2S_2 = A$ and $b_1+e_1T_1 = b_2+e_2T_2 = B$ and $A\cap B = \emptyset$, as desired.

Now we consider
\[\EE[(\sigma_\ell y_0\nkAP^{\ell,0}(y))^2] = \EE[(\sigma_\ell y_i\nkAP^{\ell,i}(y))^2] = \sum_{S\in\binom{[k]}{\ell}^2}\sum_{a\in(\ZZ/n\ZZ)^2}\sum_{\substack{d\in[n/2]^2\\0\in a_j+d_jS_j}}\EE\left[\prod_{j=1}^2\prod_{s_j\in S_j}y_{a_j+d_js_j}\right].\]
We claim that the only nonzero terms are those with $a_1+d_1S_1=a_2+d_2S_2$. Indeed, they are both sets as $\gcd(n,(k-1)!) = 1$, and if they are not equal then there is some element with an odd exponent.

Now, putting it all together, we see that
\[\Cov\left[(1+y_0^2)(\sigma_\ell\nkAP^{\ell,0}(y))^2,(1+y_i^2)(\sigma_\ell\nkAP^{\ell,i}(y))^2\right]\]
has a contribution of $O_k(n)$ terms in its $\EE[XY]$ portion which we bound by $O_k(n)$. Otherwise it only has terms corresponding to $a_1+d_1S_1=a_2+d_2S_2=A$ and $b_1+e_1T_1=b_2+e_2T_2=B$ and $A\cap B = \emptyset$. Thus the expectations factor into a product of $\EE[\prod_{a\in A}y_a^2]$ and $\EE[\prod_{b\in B}y_b^2]$. This is canceled by the terms described by
\[\EE[(\sigma_\ell y_0\nkAP^{\ell,0}(y))^2]\EE[(\sigma_\ell y_i\nkAP^{\ell,i}(y))^2].\]
There is one catch: the terms in this latter product of expectations $\EE[\prod_{a\in A}y_a^2]\EE[\prod_{b\in B}y_b^2]$ which have $A\cap B\neq\emptyset$ are not canceled in the $\EE[XY]$ term. However, we see that there are $O_k(n)\cdot O_k(1)$ of them, for after choosing whichever of $O_k(n)$ values for $A$ that we want, we have $O_k(1)$ choices of $B$ that both go through $j$ and intersect $A$ (unless $A$ goes through $j$, but then it only has $O_k(1)$ choices and $B$ has $O_k(n)$ choices in this case).

So, overall, the covariance is indeed $O_k(n)$, as desired.

\section{Bounding moments}\label{sec:bounding-moments}
For the second part we need to prove that $\mathbb{E}[|W-W'|^3]$ is of size $O_k(n^{-3/2})$ which is the same as
\[\EE[(\sum_{\ell=1, 3\le \ell\le k}|W^{(\ell)}-W'^{(\ell)}|^2)^\frac{3}{2}] = O_k(n^{-\frac{3}{2}}).\]
Therefore it in fact suffices to prove that 
\[\EE[|W^{(\ell)}-W'^{(\ell)}|^3]= O_k(n^{-\frac{3}{2}}),\]
using H\"older's inequality. For $\ell = 1$ this is trivial, using $\sigma_1 = \Theta_k(n^{3/2})$. Now let $\ell > 1$ and note that 
\begin{align*}
\EE[(|W^{(\ell)}-W'^{(\ell)}|)^3] &= \EE[|\nkAP^{\ell}(y)-\nkAP^{\ell}(y')|^3]\\
&= \EE_{t\in\ZZ/n\ZZ}\EE[|y_t-y_t'|^3|\nkAP^{\ell,t}(y)|^3]\\
&= \EE[|y_0-y_0'|^3|\nkAP^{\ell,0}(y)|^3]\\
&\le \EE[|y_0-y_0'|^6]^{\frac{1}{2}}\EE[|\nkAP^{\ell,0}(y)|^6]^{\frac{1}{2}}\\
&= \sqrt{120}\EE[\nkAP^{\ell,0}(y)^6]^{\frac{1}{2}}.
\end{align*}
We now use the $\nkAP^{\ell,t,S}(y)$ refinement as in \cref{subsec:non-diagonal-terms}. Using H\"older's inequality again, it suffices to prove that
\[\EE[\nkAP^{\ell,0,S}(y)^6] = O_k(n^{-3}).\]
We adopt a similar method to \cref{sec:bounding-E-prime}. Since $\sigma_\ell = \Theta_k(n)$, it amounts to showing there are $O_k(n^3)$ terms of $\EE[\nkAP^{\ell,0,S}(y)^6]$ which have a nonzero contribution, i.e., even exponents of the $y$'s.

Each term within this sum is chosen via $d_1,\ldots,d_6\in [n/2]$, each of which contributes the terms $y_{d_jS}$. Now for any valid tuple $(d_1,\ldots,d_6)$, make a graph on vertex set $\{1,\ldots,6\}$, with $i,j$ connected if $d_iS$ and $d_jS$ intersect.

Each term within this sum is chosen via $d_1,\ldots,d_6\in [n/2]$ and offsets (e.g. which term equals the $y_0$ term that is being divided in $\nkAP^{\ell,0,S}$), each of which contributes terms of the form $y_{d_jT_j}$, where $S_j$ is a set which is one of $O_k(1)$ many shifts of $S$ that contain $0$, and $T_j = S_j-\{0\}$. Now for any valid tuple $(d_1,\ldots,d_6)$, make a graph on vertex set $\{1,\ldots,6\}$, with $i,j$ connected if $d_iT_i$ and $d_jT_j$ intersect.

Given such a graph, we claim that there are $O_k(n)$ ways to choose the $d$s associated to a connected component of this graph in a manner compatible with the graph. Indeed, we find that, for example, if $d_1,d_2,d_3$ are connected, then choosing $d_1$ will fix the value say of $cd_2$ for some $c\in [k]$, which yields $O_k(1)$ possible values of $d_2$, and then $O_k(1)$ possible values of $d_3$ similarly.

Furthermore, in a graph associated to a valid tuple $(d_1,\ldots,d_6)$, i.e., one with even powers of the $y$s, we must have no disconnected vertices. Indeed, since the vertex is disconnected, the associated value of $d_i$ must give rise to a multiset $d_iS\pmod{n}$ which must cancel out all of its contributions to the $y$'s. This is impossible as $\gcd(n,(k-1)!) = 1$.

Finally, we have at most $6/2 = 3$ connected components of non-isolated vertices, each of which have $O_k(n)$ ways to choose the $d$'s. This yields an upper bound of $O_k(n^3)$, as desired.

\subsection*{Conclusion}
Putting the various estimates together we have proved the following result, which is a restatement of \cref{thm:scaling-central-limit}.
\begin{theorem}\label{thm:scaling-central-limit-2}
For $g\in C^3(\RR^{k-1})$, we have
\[|\EE g(W) - \EE g(Z)|\lesssim_{k}\frac{M_2(g)+M_3(g)}{n^{1/2}},\]
where $Z$ is a standard Gaussian random vector in $\RR^{k-1}$.
\end{theorem}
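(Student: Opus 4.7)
The plan is to combine the invariance principle from \cref{thm:invariance} with Meckes' exchangeable pairs bound in \cref{thm:exchangeable}. First I would use \cref{thm:invariance} to replace the $p$-biased inputs $y_i$ by independent standard normals $y_i'$ at a cost of $O_{k,p}(M_3(g)/n^{1/2})$, reducing the problem to proving the same bound for $W = (\nkAP^{\ell}(y))_{\ell \in \{1,3,\ldots,k\}}$ in the fully Gaussian setting. This is clean because all the $\nkAP^{\ell}$ are multilinear (using $\gcd(n,(k-1)!)=1$) and each coordinate has been renormalized to have unit variance.

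Next I would build the exchangeable pair $(W, W')$ by picking a uniformly random coordinate $I \in [n]$ and resampling $y_I$ to obtain an independent standard normal $y_I'$. A direct coordinatewise computation, leveraging multilinearity and the fact that replacing $y_I$ by $0$ deletes exactly the degree-one-in-$y_I$ contribution, yields the linear regression identity of \cref{prop:exchange}, namely $\EE[W'-W \mid W] = -\Lambda W$ with $\Lambda = \operatorname{diag}(\ell/n)_{\ell}$. Then I would set $E' = \EE[(W'-W)(W'-W)^T \mid y] - 2\Lambda$, which automatically satisfies the hypothesis of \cref{thm:exchangeable}. Since $\|\Lambda^{-1}\|_{\op} = n$, to conclude the $n^{-1/2}$ rate I need to show $\EE\|E'\|_{\HS} = O_k(n^{-3/2})$ and $\EE|W'-W|^3 = O_k(n^{-3/2})$.

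For $\EE\|E'\|_{\HS}$, I would bound $\EE\|E'\|_{\HS} \le \sqrt{\EE \tr(E' E'^T)}$ and then bound each of the $k^2$ summands $\EE[(E')_{i,j}^2]$ by $O_k(n^{-3})$. The pointwise formulas
\[(E')_{i,j} = \frac{1}{n}\sum_{s\in [n]} (y_s^2+1)\,\nkAP^{i,s}(y)\nkAP^{j,s}(y) \quad (i \neq j),\]
\[(E')_{i,i} = \frac{1}{n}\sum_{s\in [n]} (y_s^2+1)\,\nkAP^{i,s}(y)^2 - \frac{2i}{n},\]
expressed via the further refinements $\nkAP^{\ell,t,S}$, reduce the matter to counting how many tuples of shifts and common differences produce a monomial in the $y$'s with all even exponents. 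This is the heart of the argument, and where I expect most of the work. The key combinatorial input is that a single arithmetic progression shift $dS \pmod{n}$ cannot self-cancel when $\gcd(n,(k-1)!)=1$, so every summand whose ``intersection graph'' on the $d_i$'s contains an isolated vertex contributes zero in expectation. Together with the observation that each connected component of this intersection graph has only $O_k(n)$ valid choices (each successive $d_j$ is pinned to $O_k(1)$ values once one is fixed), this forces the number of nonvanishing terms to be at most a power of $n$ one less than the trivial count, producing the required cancellation. For the diagonal case $\ell \ge 3$ one uses the variance form $\EE[(E')_{\ell\ell}^2] = \Var(\EE[(W_\ell'-W_\ell)^2 \mid y])$ and repeats the analysis, being careful to match the ``trivial'' contributions $a_1+d_1 S_1 = a_2+d_2 S_2$, $b_1+e_1 T_1 = b_2+e_2 T_2$ with disjoint $A,B$ against the subtracted $\EE[X]\EE[Y]$ to leave only $O_k(n)$ genuine covariance.

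For the third moment $\EE|W'-W|^3$, I would use H\"older to reduce to bounding $\EE|W^{(\ell)}-W'^{(\ell)}|^3$ coordinatewise. Since only one coordinate $I$ is resampled, this equals $\EE[|y_0-y_0'|^3 |\nkAP^{\ell,0}(y)|^3]$ up to averaging over $I$; Cauchy--Schwarz and another H\"older reduction on the $\nkAP^{\ell,0,S}$ refinement leaves the task of showing $\EE[\nkAP^{\ell,0,S}(y)^6] = O_k(n^{-3})$, which is again an intersection-graph counting problem on six copies of $d_j S_j$ through $0$: each connected component costs $O_k(n)$, no vertex is isolated by the $\gcd$ hypothesis, so there are at most three components and hence $O_k(n^3)$ nonvanishing sextuples, exactly compensating $\sigma_\ell^{-6} = \Theta_k(n^{-6})$. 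The main obstacle in the whole argument is the diagonal term $\EE[(E')_{\ell\ell}^2]$, because the naive bound $\EE[(E')_{\ell\ell}^2] \le \EE[(W_\ell'-W_\ell)^4]$ is too weak by a factor of $n$, and one must extract genuine cancellation between the conditional second moment and its mean via the disjoint-set identification described above.
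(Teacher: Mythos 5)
Your proposal follows the paper's own route essentially step for step: invariance principle to pass to Gaussian inputs, Meckes' exchangeable-pairs theorem with the single-coordinate resampling pair and $\Lambda = \operatorname{diag}(\ell/n)$, reduction of $\EE\|E'\|_{\HS}$ to entrywise second-moment bounds of order $O_k(n^{-3})$ via the intersection-graph parity-counting argument (with the $\gcd(n,(k-1)!)=1$ condition ruling out self-cancelling shifts and isolated vertices), the variance decomposition $\EE[(E')_{\ell\ell}^2]=\Var(\EE[(W_\ell'-W_\ell)^2\mid y])$ with cancellation of the disjoint $A,B$ terms against $\EE[X]\EE[Y]$ for the diagonal entries, and the H\"older reduction of $\EE|W'-W|^3$ to $\EE[\nkAP^{\ell,0,S}(y)^6]=O_k(n^{-3})$ via the same three-component counting. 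You also correctly isolate the diagonal term as the subtle point where the naive fourth-moment bound loses a factor of $n$ and genuine covariance cancellation is required.
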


\section{Conversion to Bound in Kolmogorov Distance}\label{sec:kolmogorov-distance}
We now convert this test function bound into a bound on the cumulative distribution function.
\begin{lemma}\label{lem:gaussian-pair}
For any $a, b$ we have that 
\[\mathbb{P}\bigg[\nkAP^{1}(y)<a,\frac{\sum_{i=3}^{k}\sigma_ip^{k-\frac{i}{2}}(1-p)^{\frac{i}{2}}\nkAP^{i}(y)}{\sqrt{\sum_{i=3}^{k}\sigma_i^2p^{2k-i}(1-p)^i}}<b\bigg] = \frac{1}{2\pi}\int_{-\infty}^{a}\int_{-\infty}^{b}e^{\frac{-(x^2+y^2)}{2}} 
~\text{dx}~\text{dy} + O_{k,p}(n^{-1/8}).\]
\end{lemma}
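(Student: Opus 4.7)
The plan is to deduce this joint CDF bound from the test-function estimate \cref{thm:scaling-central-limit-2} via a standard smoothing argument, after first observing that both coordinates in the event are the image of $W := (\nkAP^{1}(y),\nkAP^{3}(y),\ldots,\nkAP^{k}(y))$ under an orthogonal projection. Define $\pi : \RR^{k-1} \to \RR^2$ by $\pi(w_1,w_3,\ldots,w_k) = \bigl(w_1,\sum_{i=3}^k c_i w_i\bigr)$ with $c_i = \sigma_i p^{k-i/2}(1-p)^{i/2} \big/ \sqrt{\sum_{j=3}^k \sigma_j^2 p^{2k-j}(1-p)^j}$. By construction $\sum_i c_i^2 = 1$, so the two rows of $\pi$ are orthonormal, $\pi\pi^{T} = I_2$, and $\pi(Z)$ is a standard 2D Gaussian; the Gaussian CDF on the right-hand side of the lemma is exactly $\PP[\pi(Z) \in Q]$ for the quadrant $Q = (-\infty,a)\times(-\infty,b)$.

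Next I would bracket $\mathbf{1}_Q$ by smooth test functions. Fix a smooth non-increasing $\eta : \RR \to [0,1]$ equal to $1$ on $(-\infty,0]$ and $0$ on $[1,\infty)$, and set
\[h_+^\delta(x,y) = \eta\!\bigl(\tfrac{x-a}{\delta}\bigr)\eta\!\bigl(\tfrac{y-b}{\delta}\bigr),\qquad h_-^\delta(x,y) = \eta\!\bigl(\tfrac{x-a+\delta}{\delta}\bigr)\eta\!\bigl(\tfrac{y-b+\delta}{\delta}\bigr),\]
so that $h_-^\delta \le \mathbf{1}_Q \le h_+^\delta$ pointwise, with $M_2(h_\pm^\delta),M_3(h_\pm^\delta) \lesssim \delta^{-3}$ by elementary estimates on the derivatives of $\eta$. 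Letting $g_\pm^\delta = h_\pm^\delta \circ \pi$, the fact that $\pi$ has orthonormal rows (so $\|\pi u\| \le \|u\|$) transfers the derivative bounds as $M_r(g_\pm^\delta) \le M_r(h_\pm^\delta)$, and \cref{thm:scaling-central-limit-2} applied to $g_\pm^\delta$ yields
\[\bigl|\EE h_\pm^\delta(\pi W) - \EE h_\pm^\delta(\pi Z)\bigr| \lesssim_k \delta^{-3} n^{-1/2}.\]

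Finally, since $\pi(Z)$ has a bounded 2D density, $\EE h_\pm^\delta(\pi Z)$ differs from $\PP[\pi Z \in Q]$ by at most the Gaussian measure of a $\delta$-neighbourhood of $\partial Q$, which is $O(\delta)$. Sandwiching $\EE h_-^\delta(\pi W) \le \PP[\pi W \in Q] \le \EE h_+^\delta(\pi W)$ and combining gives
\[\bigl|\PP[\pi W \in Q] - \PP[\pi Z \in Q]\bigr| \lesssim_{k,p} \delta + \delta^{-3} n^{-1/2},\]
and balancing with $\delta = n^{-1/8}$ produces the advertised error $O_{k,p}(n^{-1/8})$. There is no real obstacle beyond this standard smoothness-versus-resolution tradeoff; the exponent $1/8$ is exactly what the cubic growth $M_3(h_\pm^\delta) \sim \delta^{-3}$ forces against the $n^{-1/2}$ test-function rate, and improving it would require either a sharper invariance principle or anticoncentration for $\pi W$ on scale finer than $\delta$.
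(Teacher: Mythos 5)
Your proposal matches the paper's proof in both structure and all essential estimates: the paper likewise observes that the linear combination is an $\ell^2$-normalized projection of $(\nkAP^1,\nkAP^3,\ldots,\nkAP^k)$, approximates the quadrant indicator by a product of smooth one-dimensional bump functions with $M_2\lesssim\epsilon^{-2}$, $M_3\lesssim\epsilon^{-3}$, applies the test-function CLT, bounds the Gaussian boundary error by $O(\epsilon)$, and balances at $\epsilon=n^{-1/8}$. One small bookkeeping note: the paper's proof cites \emph{both} the invariance principle (\cref{thm:invariance}) and the Gaussian CLT (\cref{thm:scaling-central-limit}), since the $y$ appearing in the lemma are the $p$-biased Bernoullis while $W$ in \cref{thm:scaling-central-limit-2} is built from Gaussian inputs; your reference to \cref{thm:scaling-central-limit-2} alone implicitly absorbs the invariance step, which is fine provided you read it as the combined Bernoulli-to-Gaussian statement. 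Also, $M_2(h_\pm^\delta)\lesssim\delta^{-2}$ (not $\delta^{-3}$), but since $M_3\lesssim\delta^{-3}$ dominates, the final rate is unchanged.
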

\begin{proof}
The key idea is to take $\phi_{\ell}$ which is a smooth function which is $1$ on $(-\infty, \ell]$, $0$ on $[\ell + \epsilon,\infty)$, has second derivative bounded by $O(\epsilon^{-2})$, and third derivative bounded by $O(\epsilon^{-3})$. Furthermore let $\phi_{\ell}$ be in $[0,1]$ over the entire domain. Given this define $\gamma_{a,b}(x,y) = \phi_a(x)\phi_b(y)$. It follows from $\sigma_i = \Theta_k(n)$ for all $3\le i\le k$ that
\[\Psi_{a,b}(y_1,y_3,\ldots,y_k) = 
\gamma_{a,b}\bigg(y_1,\frac{\sum_{i=3}^{k}\sigma_i p^{k-\frac{i}{2}}(1-p)^{\frac{i}{2}}y_i}{\sqrt{\sum_{i=3}^{k} \sigma_i^2p^{2k-i}(1-p)^i}}\bigg) \]
has $M_2(\Psi_{a,b}) = \Theta_{k,p}(\epsilon^{-2})$, $M_3(\Psi_{a,b}) = \Theta_{k,p}(\epsilon^{-3})$, and 
$\|\partial^\beta \Psi_{a,b}\| = \Theta_{k,p}(\epsilon^{-3})$ for all $|\beta| = 3$. The key point of course is that for standard Gaussians $z_i$ we have 
\[\mathbb{E}[\Psi_{a,b}(z_1,z_3,\ldots,z_k)] = \frac{1}{2\pi}\int_{-\infty}^{a}\int_{-\infty}^{b}e^{\frac{-(x^2+y^2)}{2}} 
~\textit{dx}~\textit{dy} + O_k(\epsilon)\]
and by \cref{thm:invariance,thm:scaling-central-limit} it follows that
\[\mathbb{E}[\Psi_{a,b}( \nkAP^{1}(y),\nkAP^{3}(y),\ldots,\nkAP^{k}(y))] = \mathbb{E}[\Psi_{a,b}(z_1,z_3,\ldots,z_k)] + O_{k,p}\bigg(\frac{\epsilon^{-2}+\epsilon^{-3}}{n^{1/2}}\bigg).\] Choosing $\epsilon = n^{-1/8}$, we obtain the desired right side as an upper bound, noting that $\Psi_{a,b}$ dominates the desired indicator function. A lower bound is obtained in an analogous manner and the result then follows.
\end{proof}

\section{Proof of the Failure of Local Central Limit Theorem}\label{sec:local-clt-failure}
In this section we prove that $\kAP$ does not obey a local central limit theorem.  Specifically:
\begin{reptheorem}{thm:local-clt-failure}
Fix $p\in (0,1)$ and $k\ge 3$. Then for all sufficiently large $n$ relatively prime to $(k-1)!$ there is a point $x$ such that \[\Bigg|\mathbb{P}[\kAP(z) = x] - \frac{1}{\sigma_n\sqrt{2\pi}}\exp\bigg(\frac{-(x-\mu_n)^2}{2\sigma_n^2}\bigg)\Bigg| = \Omega\bigg(\frac{1}{\sigma_n}\bigg),\]
where $\mu_n$ and $\sigma_n$ in the statement are the expectation and standard deviation of $\kAP(z)$ and $z$ is sampled i.i.d. with probability $p$.
\end{reptheorem}
We first give a high level overview of the proof.  The proof proceeds by building two sets  $L_\alpha$ and $L_\beta$ of equal size both close enough to the mean of $\kAP$ so that were there to be a local limit theorem for $\kAP$ we would necessarily have
\[\PP[\kAP \in L_\alpha]\approx\PP[\kAP \in L_\beta]\approx\frac{|L_\alpha|}{\sigma_n\sqrt{2\pi}}.\]
However, as a result of Lemma \ref{lem:gaussian-pair} we will be able to compute
$\PP[\kAP\in L_\alpha]\propto f_\delta(\alpha)$, and $\PP[\kAP \in L_\beta]\propto f_\delta(\beta)$, where $f_\delta$ is as defined in Lemma \ref{lemma:fourier} below.

The rough idea in building $L_\alpha$ and $L_\beta$ is to note that $\kAP^1+\kAP^2$ takes values very near to a lattice $G\mathbb{Z}$ where $G=\Theta(n)$.  Meanwhile $\kAP^{>2}$ has standard deviation $\Theta(n)$.  $L_\alpha$ (and $L_\beta$) will roughly correspond to the event that $\kAP\approx \alpha G \mod G$ ($\approx \beta G \mod G$ respectively), and we can use our joint central limit theorem for $\kAP^1$ and $\kAP^{>2}$ to show that $\PP[\kAP \approx \alpha G \mod G]\propto f_\delta(\alpha)$ for some choice of $\delta$.  But since $f_\delta(\alpha)\neq f_\delta(\beta)$ we will conclude that $\PP(L_\alpha)\neq \PP(L_\beta)$ disproving any chance that $\kAP$ obeys a local central limit theorem.

Before beginning our proof, we define $f_\delta$ and prove it is nonconstant.

\begin{lemma}\label{lemma:fourier}
The function 
\[f_\delta(x) = \sum_{\lambda\in\mathbb{Z}} e^{\frac{-(x-\lambda)^2}{\delta}}\]
is not the constant function for any value $\delta > 0$.  Furthermore for any $C>0$ there is a constant $D>0$ so that if $\delta<C$ then for some pair $\alpha,\beta\in (0,1)$ we have $f_\delta(\alpha)-f_\delta(\beta)\ge D$.
\end{lemma}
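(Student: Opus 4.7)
The plan is to prove the lemma via the Poisson summation formula applied to the Gaussian $y \mapsto e^{-(x-y)^2/\delta}$, whose Fourier transform is $\sqrt{\pi\delta}\,e^{-2\pi i x \xi}\,e^{-\pi^2 \delta \xi^2}$. This yields the dual representation
\[
  f_\delta(x) \;=\; \sqrt{\pi\delta}\sum_{n\in\ZZ} e^{-\pi^2 n^2 \delta}\,e^{2\pi i n x},
\]
presenting $f_\delta$ as a $1$-periodic real-analytic function whose Fourier coefficients $\sqrt{\pi\delta}\,e^{-\pi^2 n^2\delta}$ are all strictly positive. In particular $f_\delta$ is nonconstant for every $\delta > 0$, which handles the qualitative half of the lemma.

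For the quantitative part, my natural choice is $\beta = 1/2$ and $\alpha$ near $0$ (the maximum of $f_\delta$, as visible from the Fourier expansion), with a small continuity perturbation at the end to land inside $(0,1)$. From the Fourier expansion I would read off the clean identity
\[
  f_\delta(0) - f_\delta(1/2) \;=\; 2\sqrt{\pi\delta}\sum_{n \text{ odd}} e^{-\pi^2 n^2 \delta} \;\ge\; 4\sqrt{\pi\delta}\,e^{-\pi^2\delta},
\]
which is bounded below by a positive constant depending only on $C$ as long as $\delta$ stays bounded away from $0$, say $\delta \in [\delta_0, C]$.

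For $\delta$ small the Poisson-side bound degenerates, so I would revert to the original series: the $\lambda = 0$ summand gives $f_\delta(0) \ge 1$, whereas $f_\delta(1/2) = 2\sum_{n \ge 0} e^{-(2n+1)^2/(4\delta)} \le 4\,e^{-1/(4\delta)}$ by summing a geometric tail. Choosing $\delta_0$ so that $4\,e^{-1/(4\delta_0)} < 1/2$ yields $f_\delta(0) - f_\delta(1/2) \ge 1/2$ on $(0, \delta_0)$, and combining both regimes produces a single constant $D = D(C) > 0$ with $f_\delta(0) - f_\delta(1/2) \ge 2D$ throughout $(0, C)$. A final continuity perturbation replaces $0$ by some $\alpha \in (0,1)$ close to $0$ with $|f_\delta(\alpha) - f_\delta(0)| < D$, giving honest interior points. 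The one real obstacle is the complementary degeneracy of the Poisson-side and direct-side bounds, which forces a case split on the size of $\delta$; each case individually is routine.
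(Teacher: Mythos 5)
Your proposal is correct, and it takes the same analytic starting point as the paper (Poisson summation / the Fourier expansion of $f_\delta$ on $[0,1]$), but the quantitative step diverges. The paper invokes Parseval to lower-bound the $L^2$ variance $\int_0^1 (f_\delta - \hat f_\delta(0))^2\,dx = \sum_{n\neq 0}\hat f_\delta(n)^2$, then infers an $L^\infty$ oscillation bound from $\sup f - \inf f \ge \sqrt{\operatorname{Var}}$; you instead evaluate the specific difference $f_\delta(0) - f_\delta(1/2)$ directly, once from the dual (Fourier) series and once from the primal series.

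Your version is in one respect more faithful to the literal statement of the lemma. The paper's Parseval lower bound behaves like $\Theta(\sqrt{\delta})$ as $\delta\to 0$, so it degenerates in the small-$\delta$ regime and the paper's closing phrase ``whenever $\delta$ is bounded'' is doing unstated work (in context the paper only ever applies the lemma at a single fixed $\delta = \delta(k,p)$, so no harm is done). Your two-regime case split, with the Poisson-side bound $4\sqrt{\pi\delta}\,e^{-\pi^2\delta}$ for $\delta\in[\delta_0,C]$ and the primal-side bound $f_\delta(0)\ge 1$, $f_\delta(1/2)\le 4e^{-1/(4\delta)}$ for $\delta<\delta_0$, covers the full range $\delta\in(0,C)$ uniformly, which is exactly what the stated lemma asks for. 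One minor point: your final perturbation to move $0$ to an interior $\alpha\in(0,1)$ must let $\alpha$ depend on $\delta$ (since as $\delta\to 0$ the spike at $0$ narrows and $f_\delta(\alpha)$ drops off rapidly for fixed $\alpha$), but the lemma's quantifiers permit this, so the argument is sound. Both proofs are correct; yours is slightly more robust and self-contained on the small-$\delta$ side.
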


\begin{proof}
Let us calculate the Fourier transform of $f=f_\delta$ as $1$-periodic function. Note that 
\begin{align*}
\hat{f}(n) &= \int_{[0,1]} e^{-2\pi inx} f(x)~\text{dx} = \int_{[0,1]} e^{-2\pi inx} \sum_{\lambda\in\mathbb{Z}} e^{\frac{-(x-\lambda)^2}{\delta}}~\text{dx}\\
&= \int_{\mathbb{R}} e^{-2\pi inx}  e^{\frac{-x^2}{\delta}}~\text{dx} = \sqrt{\pi \delta}e^{-\pi^2n^2\delta},
\end{align*}
and therefore the function is not constant. Furthermore, by Parseval we have that
\begin{align*}
\int_{[0,1]} (f(x)-\hat{f}(0))^2 dx&=\sum_{n=1}^\infty \hat{f}^2(n)=\sum_{n=1}^\infty \pi \delta e^{-2\pi n^2\delta}\ge \pi \int_\delta^\infty e^{-2\pi\frac{x^2}{\delta}}dx
\end{align*}
and so the variance of $f$ is bounded below whenever $\delta$ is bounded, proving the result.
\end{proof}
We now prove our main result, the failure of a local central limit theorem for constant $p$ and $k$.
\begin{reptheorem}{thm:local-clt-failure}
Fix $p\in (0,1)$ and $k\ge 3$. Then for all sufficiently large $n$ relatively prime to $(k-1)!$ there is a point $x$ such that \[\Bigg|\mathbb{P}[\kAP(z) = x] - \frac{1}{\sigma_n\sqrt{2\pi}}\exp\bigg(\frac{-(x-\mu_n)^2}{2\sigma_n^2}\bigg)\Bigg| = \Omega\bigg(\frac{1}{\sigma_n}\bigg),\]
where $\mu_n$ and $\sigma_n$ in the statement are the expectation and standard deviation of $\kAP(z)$ and $z$ is sampled i.i.d. with probability $p$.
\end{reptheorem}

Throughout this section we use $x_i$ to denote the 0,1 indicator of whether $i$ is in our random set, and $y_i$ to denote the normalized Bernoulli random variables $y_i:=(x_i-p)/\sqrt{p(1-p)}$.  We additionally use the shorthand $\ell=\sum_{i=1}^n y_i$ and $\tilde \ell:=\sum_{i=1}^n x_i$ and $q:=1-p$.

First we need exact formulae for $\kAP^1$ and $\kAP^2$ in terms of $\ell$, which are

\begin{align*}
\kAP^{1}&=\sum_{i=1}^n \frac{k(n-1)}{2}p^{k-\frac{1}{2}}q^\frac12 y_i= \frac{k(n-1)}{2}p^{k-\frac{1}{2}}q^{\frac12}\ell\\
\kAP^2&={k\choose 2}p^{k-1}q\sum_{|S|=2}y_S={k\choose 2}\frac{p^{k-1}q}{2}\left(\ell^2-\sum_{i=1}^n y_i^2\right)={k\choose 2}\frac{p^{k-1}q}{2}\left(\ell^2-n-\frac{1-2p}{\sqrt{pq}} \ell\right)
\end{align*}
where $y_S = \prod_{i\in S}y_i$. And so we find that we can express $\kAP^1+\kAP^2=C_0+C_1\ell+C_2\ell^2:=Q(\ell)$ where
\begin{align*}
C_0:&=-n\frac{k(k-1)}{4}p^{k-1}q\\
C_1:&=\frac{k(n-1)}{2}p^{k-\frac12}q^\frac12-\frac{(1-2p)k(k-1)}{4}p^{k-\frac32}q^\frac12\\
C_2:&=\frac{k(k-1)}{4}p^{k-1}q
\end{align*}

So to understand $\kAP^1+\kAP^2$ it is enough to understand $\ell$.  We note that $\ell$ is valued on the lattice $-n\sqrt{p/q}+\mathbb{Z}/\sqrt{pq}$.  The most commonly taken value for $\ell$ occurs when $\tilde \ell=\sum_{i=1}^n x_i=[pn]$.  When this occurs we see that $\ell$ takes the value 
$$a_0:=\frac{\tilde \ell-pn}{\sqrt{pq}}=\frac{[pn]-pn}{\sqrt{pq}}$$
and hence $\kAP^1+\kAP^2$ takes the value
$$x_0:=\kAP^1+\kAP^2=Q( a_0)=C_0+C_1a_0+C_2{a_0}^2$$
Now we can define $X=\kAP^1(y)+\kAP^2(y)-x_0$ and $Y=\kAP^{\ge 3}(y)$.  So $X+Y=\kAP-\mu-x_0$, and the most likely value taken by $X$ is 0.

Let  $\ldots,A_{-2},~A_{-1},~A_0=0,~A_1,\ldots$ be the values taken by $X$ when $|\ell|\lesssim_{k,p} n$, listed in order. In general we see that $X$ takes the value $A_t$ whenever $\tilde \ell=[pn]+t$ and so $\ell =a_0+t/\sqrt{pq}$ when $|\ell|\lesssim_{k,p} n$ (as $Q'(\ell) > 0$ in such a range). Therefore we can compute for any $t$ that
\begin{align*}
A_t&=Q\left(a_0+\frac{t}{\sqrt{pq}}\right)-Q(a_0)=C_2\left(\frac{2ta_0}{\sqrt{pq}}+\frac{t^2}{pq}\right)+C_1\frac{t}{\sqrt{pq}}
\end{align*}
To alleviate notation we define a constant $G$ for the dominant increment  $G:=C_1/\sqrt{pq}$. 
It will commonly be helpful to have the bound
\[|A_t-tG|=\left|C_2\left(\frac{2ta_0}{\sqrt{pq}}+\frac{t^2}{pq}\right)\right|\le C_2\frac{t^2+2t}{pq}.\]
Now we are in a position to define the events we look at.  Fix $\alpha\in [0,1]$, and  $i,B\in \mathbb{R}$.  Then we define the intervals $I_\alpha(i,B)$ and families of intervals $L_\alpha(B,s)$ by  setting
\begin{align*}
I_\alpha(i,B)&:=\left[G(i+\alpha)-B,~G(i+\alpha)+B\right]\\
L_\alpha(B,s)&:=\bigcup_{i=-s}^s I_{\alpha}(i,B)
\end{align*}

Note that if $\alpha=0$ then the intervals $I_0(i,B)$ are just intervals of length $2B$ centered around that lattice points in $C_1\mathbb{Z}/\sqrt{pq}$.  These intervals are disjoint so long as we ensure $|B|<G/2$.  As $\alpha$ moves between $0$ and $1$, the interval slides between neighboring lattice points in $G\mathbb{Z}$. $L_\alpha(B,s)$ collects the most central $2s+1$ intervals in the collection.

Our goal becomes to show that for some pair $B,s=o(n)$ there exist distinct values $\alpha,\beta\in (0,1)$ so that $\PP[X+Y\in L_\alpha(B,s)]$ and $\PP[X+Y\in L_\beta(B,s)]$ are far. This will contradict the existence of a local limit theorem, thus finishing our proof of \cref{sec:local-clt-failure}.

We first choose $\eta$ so that $\PP[|Y|\ge \eta C_1/\sqrt{pq}]\le n^{-100}$.  By hypercontractivity concentration bounds (see e.g. \cite[Theorem~10.24]{O14}) we see that we may take $\eta = \Theta_{p,k}((\log n)^{k/2})$.

We may use Lemma \ref{lem:gaussian-pair} and some work to compute $\PP[X+Y\in L_\alpha(B,s)]$. We state our result and postpone the calculation to \cref{app:compute}.

\begin{lemma}\label{LAlphaEstimator2}
Assume that $0<B<n^{1-1/36}$ and $\eta<s<n^{1/2-1/24}$.  Let $\sigma_Y$ denote the standard deviation of $Y$ (and as such $\sigma_Y=\Theta(n)$). Then 
\[\PP\left[X+Y\in L_\alpha(B,s)\right] =\frac{2sB\sqrt{2}}{\sigma_Y\sqrt{\pi npq}}f_\delta(\alpha)+O\left(\eta n^{-1/8}\right)\]
for some uniformly bounded $\delta = \delta(n, k, p)$.
\end{lemma}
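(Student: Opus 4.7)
The plan is to stratify by $t = \tilde\ell - [pn]$, which determines $X = A_t$, and then estimate the remaining contribution from $Y$ using Lemma \ref{lem:gaussian-pair}. First, by hypercontractive concentration of $Y$, at cost $n^{-100}$ we restrict to $|Y| \leq \eta G$, so only $|t| \leq s + \eta + O(1)$ contributes. For each valid $(t, i)$ with $|i| \leq s$, the event $\{X + Y \in I_\alpha(i, B),\, \tilde\ell = [pn]+t\}$ equals $\{\tilde\ell = [pn]+t,\, Y \in I_{t,i}\}$, where $I_{t,i}$ is an interval of length $2B$ centered at $G(i+\alpha) - A_t$; these events are pairwise disjoint. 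Substituting $j = i - t$, the center becomes $G(j+\alpha) - (A_t - tG)$, and since $|A_t - tG| = O(t^2) = O(n^{1-1/12})$ uniformly for $|t| \leq s$ while $\sigma_Y = \Theta(n)$, the Gaussian exponent $-(G(j+\alpha) - (A_t - tG))^2/(2\sigma_Y^2)$ equals $-G^2(j+\alpha)^2/(2\sigma_Y^2)$ up to multiplicative error $1 + O(n^{-1/12})$. This decouples the $Y$-contribution from $t$.

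\textbf{Joint CLT and assembling the main term.} For each $|j| \leq \eta + O(1)$, apply Lemma \ref{lem:gaussian-pair} to the axis-aligned rectangle $R_j$ in $(\tilde\ell, Y)$-space given by $|\tilde\ell - [pn]| \leq s$ and $Y \in [G(j+\alpha) - B, G(j+\alpha)+B]$. This expresses $\PP[R_j]$ as a product of one-dimensional Gaussian marginals up to $O(n^{-1/8})$ error. The local CLT for the Binomial gives $\PP[|\tilde\ell - [pn]| \leq s] \approx (2s+1)/\sqrt{2\pi npq}$ since $s \ll \sqrt{npq}$ (so the $e^{-t^2/(2npq)}$ factor is essentially $1$ across this range), and the Gaussian density for $Y$ gives $\PP[Y \in \cdot] \approx \frac{2B}{\sqrt{2\pi}\,\sigma_Y}\, e^{-G^2(j+\alpha)^2/(2\sigma_Y^2)}$. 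Summing over $j \in \mathbb{Z}$ (tails $|j| > \eta$ are exponentially small in $\eta^2$) identifies $\sum_{j} e^{-G^2(j+\alpha)^2/(2\sigma_Y^2)} = f_\delta(\alpha)$ with $\delta = 2\sigma_Y^2/G^2 = \Theta(1)$, producing the claimed main term after collecting constants.

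\textbf{Main obstacle.} The chief technical difficulty is error accumulation: each application of Lemma \ref{lem:gaussian-pair} costs $O(n^{-1/8})$, so naively stratifying and applying the joint CLT once per $t$ would yield an inadmissible $O(s\, n^{-1/8})$ total error. The key trick that avoids this is the change of variables $j = i - t$, together with the observation that $|A_t - tG| = O(t^2)$ is small on the relevant scale (absorbable as a uniform $1 + O(n^{-1/12})$ correction in the Gaussian exponent); one then applies the joint CLT only $O(\eta)$ times, once for each value of $j$. The remaining care is in verifying that the boundary effects near $|i| = s$ (where the valid range of $t$ for a fixed $j$ shrinks by $O(\eta)$) and the tails of the $j$-sum both contribute negligibly, and that the multiplicative corrections in the exponent combine to give only an $O(\eta\, n^{-1/8})$ total error.
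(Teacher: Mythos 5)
Your overall plan—stratify by $t=\tilde\ell-[pn]$, change variables $j=i-t$, and apply Lemma~\ref{lem:gaussian-pair} only $O(\eta)$ times, once per $j$—is the right idea and structurally matches the paper. But the step where you replace the union over $t$ by the axis-aligned rectangle $R_j$ contains a genuine gap. After fixing $j$, the event you need is the disjoint union over $t$ of $\{\tilde\ell=[pn]+t,\ Y\in G(j+\alpha)-(A_t-tG)+[-B,B]\}$. This is a staircase in $(\tilde\ell,Y)$-space, not a rectangle, because the $Y$-interval is shifted by the $t$-dependent amount $A_t-tG$. Lemma~\ref{lem:gaussian-pair} only produces probabilities of rectangular (product) events $\{\nkAP^1<a,\ Y'<b\}$, so it does not apply to the staircase. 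Your ``multiplicative error in the Gaussian exponent'' observation shows that the \emph{marginal} density of $Y$ is roughly constant over the range of shifted centers, but that alone cannot convert the staircase probability into $\PP[R_j]$: $\tilde\ell$ and $Y$ are correlated, and nothing in Lemma~\ref{lem:gaussian-pair} gives you control of the conditional distribution of $Y$ given $\tilde\ell$, only of joint CDFs. Saying the exponent ``decouples the $Y$-contribution from $t$'' implicitly assumes exactly the sort of conditional factorization you do not have.

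The missing ingredient is a containment-of-events (sandwiching) step. Since $|A_t-tG|\le C_2(t^2+2t)/(pq)\le\Delta$ uniformly, the staircase sits between the two rectangles $\{|\tilde\ell-[pn]|\lesssim s,\ Y\in G(j+\alpha)+[-B\mp\Delta,\,B\pm\Delta]\}$, and the ``frame'' between them has probability $O(s\Delta/n^{3/2})=O(s^3/n^{3/2})\ll n^{-1/8}$ under your parameter constraints; Lemma~\ref{lem:gaussian-pair} can then be applied to the two rectangles. This is precisely what the paper does via Lemmas~\ref{SumLowerBound} and~\ref{SumUpperBound}: it sandwiches $\PP[X+Y\in L_\alpha(B,s)]$ between probabilities of product events $\{|X|\le A_{s'},\ Y\in L_\alpha(\eta,B')\}$ with adjusted $s',B'$, and only then applies Lemma~\ref{lem:gaussian-pair} interval-by-interval (Lemma~\ref{LAlphaEstimator}). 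You have identified the correct bound $|A_t-tG|=O(s^2)$ and the correct accounting of $O(\eta)$ CLT applications, but without translating the shift bound into an actual containment of events, the application of Lemma~\ref{lem:gaussian-pair} to $R_j$ does not produce the probability you need.
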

\begin{remark}
In fact, as $n\to\infty$ this function tends towards a limit; we do not bother replacing $\delta(n, k, p)$ with its limit $\delta(k, p)$ as this will be inconsequential to our arguments.
\end{remark}

Additionally, were $\kAP$ to obey a local limit theorem, then we could compute $\PP[X+Y\in L_\alpha(B,s)]$ in a different way.

\begin{lemma}\label{LLTIntervalEstimator}
Assume that $Z$ is a random variable with mean $\mu_Z$ and standard deviation $\sigma_Z$ which for all $m\in \mathbb{N}$ satisfies 
$$\PP[Z=m]=\frac{1}{\sqrt{2\pi }\sigma_Z}\exp\left(\frac{(m-\mu_Z)^2}{2\sigma_Z^2}\right)+o(\sigma_Z^{-1})$$
Then for any set $S\subset \mathbb{Z}\cap [\mu-T,~\mu+T]$ with $T\le\sigma_Z$ we have 
\[\PP(Z\in S)=\frac{|S|}{\sqrt{2\pi }\sigma_Z}+o\left(\frac{|S|}{\sigma_Z}\right)+O\left(\frac{|S|T}{\sigma_Z^2}\right).\]
\end{lemma}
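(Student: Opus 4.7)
The plan is to prove this by directly summing the pointwise local central limit theorem hypothesis over the set $S$, using the fact that the Gaussian density is essentially constant on the window $[\mu_Z - T, \mu_Z + T]$ when $T \le \sigma_Z$. No fancy probabilistic input is needed beyond the stated hypothesis; this is essentially a deterministic Taylor expansion.

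Concretely, I would write
\[
\PP(Z \in S) \;=\; \sum_{m \in S} \PP[Z = m] \;=\; \sum_{m \in S} \left[\frac{1}{\sqrt{2\pi}\,\sigma_Z}\exp\!\left(-\frac{(m-\mu_Z)^2}{2\sigma_Z^2}\right) + o(\sigma_Z^{-1})\right],
\]
using the hypothesis of the lemma for each point. The $o(\sigma_Z^{-1})$ error is uniform in $m$, so summing it over the at most $|S|$ points of $S$ yields a total error of $o(|S|/\sigma_Z)$, which matches the second error term in the claimed estimate.

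For the main sum, the key observation is that every $m \in S$ satisfies $|m - \mu_Z| \le T \le \sigma_Z$, so $(m-\mu_Z)^2/(2\sigma_Z^2) \le 1/2$. On this range, the inequality $|e^{-x} - 1| = O(x)$ for $0 \le x \le 1/2$ gives
\[
\exp\!\left(-\frac{(m-\mu_Z)^2}{2\sigma_Z^2}\right) \;=\; 1 + O\!\left(\frac{(m-\mu_Z)^2}{\sigma_Z^2}\right) \;=\; 1 + O\!\left(\frac{T^2}{\sigma_Z^2}\right).
\]
Summing this approximation over $m \in S$ and multiplying by $(\sqrt{2\pi}\,\sigma_Z)^{-1}$ produces
\[
\sum_{m \in S} \frac{1}{\sqrt{2\pi}\,\sigma_Z}\exp\!\left(-\frac{(m-\mu_Z)^2}{2\sigma_Z^2}\right) \;=\; \frac{|S|}{\sqrt{2\pi}\,\sigma_Z} + O\!\left(\frac{|S|\,T^2}{\sigma_Z^3}\right).
\]

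To conclude, observe that the hypothesis $T \le \sigma_Z$ gives $T^2/\sigma_Z^3 \le T/\sigma_Z^2$, so the error $O(|S|T^2/\sigma_Z^3)$ is absorbed by the stated $O(|S|T/\sigma_Z^2)$ term. Combining with the earlier $o(|S|/\sigma_Z)$ error from the local limit hypothesis yields the lemma. There is no genuine obstacle here; the only minor care is in ensuring the Taylor remainder is uniform over $S$, which is automatic from the bound $|m - \mu_Z| \le T$ on the window.
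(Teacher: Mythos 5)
Your proof is correct and takes essentially the same approach as the paper's: sum the pointwise LCLT hypothesis over $S$ and control the variation of the Gaussian density on the window $|m-\mu_Z|\le T$ by a first-order estimate. The paper's one-line argument notes directly that $|1-\exp(-(m-\mu_Z)^2/(2\sigma_Z^2))|\lesssim (m-\mu_Z)/\sigma_Z$ on this range, whereas you pass through the slightly sharper intermediate bound $O(|S|T^2/\sigma_Z^3)$ before weakening to $O(|S|T/\sigma_Z^2)$; the difference is purely cosmetic.
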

\begin{proof}
This follows from simply noting that $|1-\exp((m-\mu)^2/2\sigma_Z^2)|\lesssim (m-\mu)/\sigma_Z$ in this range, and summing over $\sum_{m\in S}\PP(Z=m)$.
\end{proof}

We now have all of the tools to prove \cref{thm:local-clt-failure}.

\begin{proof}[Proof of \cref{thm:local-clt-failure}]
Choose $\delta = \delta(k, p)$, which is constant. We know from \cref{lemma:fourier} that there are $\alpha \neq \beta \in (0,1)$ so that $f_\delta(\alpha)-f_\delta(\beta) =\Omega(1)$.  If we set $B=\lfloor n^{1-1/36}\rfloor$ and $s=\lfloor n^{1/2-1/24}\rfloor$ then by \cref{LAlphaEstimator2} we have
$$\PP\left[X+Y\in L_\alpha(B,s)\right] =\frac{2sB\sqrt{2}}{\sigma_Y\sqrt{\pi npq}}f_\delta(\alpha)+O\left(\eta n^{-1/8}\right)$$
and likewise for $L_\beta(B,s)$.  Crucially, note that the main term has size $n^{-5/72}$ and so dominates the error term.  
Applying the same reasoning to $L_\beta(B,s)$ and taking differences yields
\begin{align*}
\PP\left[X+Y\in L_\alpha(B,s)\right]&-\PP\left[X+Y\in L_\beta(B,s)\right]=\frac{2sB\sqrt{2}}{\sigma_Y\sqrt{\pi npq}}\left[f_\delta(\alpha)-f_\delta(\beta)\right]+O\left(\eta n^{-1/8}\right)\\
&=\Omega(n^{-5/72})
\end{align*}

However if we assume that $\kAP$ obeys a local central limit theorem, then so does $X+Y$. Note that $L_\alpha(B,s)$ consists of elements of size at most $Gs+B=O(n^{3/2-1/24})$, and $|L_\alpha(B,s)|=\Theta(n^{3/2-5/72})$. So by \cref{LLTIntervalEstimator} we see that
\begin{align*}
\PP[X+Y\in L_\alpha(B,s)]-\PP[X+Y\in L_\beta(B,s)]=o\left(n^{-5/72}\right)
+O\left(n^{-1/9}\right)
\end{align*}
This is a direct contradiction.
\end{proof}

\section{Conclusion}\label{sec:conclusion}
We conclude by pointing out various open questions regarding the failure of the local central limit theorem. First, note that there is a gap between the theorems we proved and a total explanation of the behavior exhibited in Figure \ref{experiment}. An ideal theorem would prove more precisely that the distribution of $\kAP(S)$ tends to the convolution of two discrete Gaussians seemingly exhibited in the figure. Such a precise theorem would prove that $\kAP(S)$, conditional on the size of $S$, satisfies a local central limit theorem. Failing this precise local limit theorem, at least one could hope to understand how wildly the distribution oscillates in the following precise sense:
\begin{question}
Fix $\epsilon = 1/100$. Let $S$ be a subset of $\ZZ/n\ZZ$ where each element is chosen independently with probability $1/2$ and set $\mu_n = \EE[\AP(S)]$. What is the largest constant $C$ such that for infinitely many $n$ there exist integers $x_n,y_n$ such that $|x-\mu_n|,|y-\mu_n| \le n^{\frac{3}{2}-\epsilon}$ which satisfy \[\frac{\PP[\AP(S)=x_n]}{\PP[\AP(S)=y_n]}\ge C?\]
\end{question}
We argue in the subsection below that $C\approx 4.745$ works, however our method does suggest that in fact that $C$ cannot be taken arbitrarily large. To be more precise, the largest constant $C$ coming from our method is $\frac{\sup(f(x))}{\inf(f(x))}$ where $f(x) = \sum_{\lambda \in \ZZ} e^{-9(x-\lambda)^2}$. This is in fact the optimal constant if one can prove a sufficiently strong local limit theorem in the style of our above results. Furthermore, proving that $C$ cannot be taken to be unbounded, along with the central limit theorem in the paper, would immediately show anti-concentration for the random variable $\AP(S)$ at the correct level.

\subsection{Computing $C$}
Here we compute the value of $C$ coming out of \cref{thm:local-clt-failure} by computing the resulting value of $\lambda$ in the case $k = 3, p = \frac{1}{2}$. It is easy to show that
\[8\cdot\AP(y) = \frac{n(n-1)}{2}+\frac{3}{2}(n-1)\sum_{a\in\ZZ/n\ZZ}y_a+3\sum_{0\le a<b<n}y_ay_b+\sum_{a\in\ZZ/n\ZZ}\sum_{d\in[n/2]}y_ay_{a+d}y_{a+2d},\]
hence $\kAP^1(y)$ has steps of size $3n$ (since $y_a\in\{-1,1\}$) while $\kAP^3(y)$ has mean zero and variance $\frac{n(n-1)}{2}$. Hence the associated normal has standard deviation of size $\frac{n}{\sqrt{2}}$. If we normalize $\kAP^1(y)$ to have steps of size $1$, then $\kAP^3(y)$ will be normalized to have standard deviation $\frac{\sqrt{2}}{6}$ hence density proportional to $e^{-9x^2}$. Therefore we find value $\delta = \frac{1}{9}$, and the claimed ratio as above. It can further be shown that the minimum is attained at $\frac{1}{2}+\ZZ$ and the maximum at $\ZZ$, hence we can prove a ratio of
\[C = \frac{\sum_{x\in\ZZ}e^{-9x^2}}{\sum_{x\in\ZZ}e^{-9(x-\frac{1}{2})^2}}\approx 4.745.\]

\bibliographystyle{alpha}
\bibliography{main}

\appendix
\section{Proof of Lemma \ref{LAlphaEstimator2}}\label{app:compute}

We use all the same terminology and notation as in \cref{sec:local-clt-failure}, including $C_0, C_1, C_2$. First we need a lemma relating the probability that $X+Y\in L_\alpha(B,s)$ to a bound on the the joint distribution of $X$ and $Y$.

\begin{lemma} \label{SumLowerBound}
Assume that $s>\eta$ and $ B-C_2\frac{s^2+s}{pq}\ge 0$.  Then
$$\PP\left[X+Y\in L_\alpha(B,s)\right] \ge \PP\left[|X|\le A_{s-\eta},\quad Y\in L_\alpha\big(\eta, B-C_2\frac{s^2+s}{pq}\big)\right]$$
\end{lemma}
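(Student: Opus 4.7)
The plan is to deduce the inequality from a deterministic pointwise event inclusion; since $X$ and $Y$ are both nonlinear functions of the same underlying $y_i$'s, they are not independent, and the most natural route to a lower bound is to exhibit a nested event rather than attempt to factor a joint probability. The geometric picture is that $X$ is concentrated on the near-lattice $\{A_t\}_{t\in\ZZ}$ with approximate spacing $G$, while $Y$ contributes continuous smearing of magnitude at most $\eta G$ with overwhelming probability; when the discrete shift indexed by $t$ and the smearing $Y$ both stay within specified tolerances, the sum $X+Y$ lands inside $L_\alpha(B,s)$.

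First I would partition $\{|X|\le A_{s-\eta}\}$ by the value $X=A_t$. Since $Q'(a_0+u)=C_1+2C_2(a_0+u)$ is positive throughout the range of interest (because $C_1=\Theta(n)$ dominates the linear correction), the map $t\mapsto A_t$ is monotone, and the event $\{|X|\le A_{s-\eta}\}$ reduces, up to trivial boundary effects, to $\{X=A_t:|t|\le s-\eta\}$. The estimate $|A_t-tG|\le C_2(t^2+2|t|)/pq\le C_2(s^2+s)/pq$ already recorded just before the lemma quantifies the deviation of $X$ from the exact lattice point $tG$, and is precisely the quantity being subtracted from $B$ in the lemma's hypothesis.

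Next I would establish the pointwise inclusion. Fix $t$ with $|t|\le s-\eta$ and assume $Y$ lies within radius $B-C_2(s^2+s)/pq$ of some $G(j+\alpha)$ with $|j|\le\eta$. Writing $A_t=tG+\rho_t$ with $|\rho_t|\le C_2(s^2+s)/pq$ and setting $i:=t+j$, one has $|i|\le s$ and
\begin{align*}
X+Y &\in G(i+\alpha)+\rho_t+\bigl[-B+C_2(s^2+s)/pq,\ B-C_2(s^2+s)/pq\bigr] \\
&\subseteq G(i+\alpha)+[-B,B]=I_\alpha(i,B)\subseteq L_\alpha(B,s),
\end{align*}
where the set inclusion simply absorbs $\rho_t$ into $[-B,B]$ using the bound on $|\rho_t|$.

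Taking the union of these sufficient configurations over all admissible pairs $(t,j)$ exhibits the desired event inclusion, and passing to probabilities then yields the lemma. The argument is essentially a triangle inequality dressed up in the $L_\alpha$ notation, so there is no serious obstacle; the only fine points are confirming the monotonicity of $t\mapsto A_t$ on the relevant range, checking that the hypothesis $B-C_2(s^2+s)/pq\ge 0$ makes the smaller family of intervals nonempty, and noting that the constraint $s>\eta$ guarantees the range $|t|\le s-\eta$ is itself nonempty.
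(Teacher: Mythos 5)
Your proposal is correct and takes essentially the same route as the paper: a deterministic event inclusion established by writing $A_t = tG + \rho_t$ with $|\rho_t|\le C_2(s^2+s)/pq$, setting $i=t+j$ so that $|i|\le s$, and applying the triangle inequality to conclude $X+Y\in I_\alpha(i,B)\subseteq L_\alpha(B,s)$. The only differences are cosmetic — you spell out the monotonicity of $t\mapsto A_t$ and the union over admissible $(t,j)$, whereas the paper states the inclusion directly — so the two proofs coincide in substance.
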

\begin{proof}
We show that, in fact, the event on the left hand side contains the event on the right.  Let $X=A_t$ where $|t|\le s-\eta$ and assume that $Y=G(i+\alpha)+b$ where $|i|\le \eta$ and $b\le  B-C_2\frac{s^2+s}{{pq}}$.  Then we note that
\begin{align*}
\left|X+Y-G(t+i+\alpha)\right|&=\left|\left(tG+C_2\left(\frac{2ta_0}{\sqrt{pq}}+\frac{t^2}{pq}\right)+G(i+\alpha)+b\right)-G(t+i+\alpha)\right|\\
%&=\left|C_2\left(\frac{2ta_0}{\sqrt{pq}}+\frac{t^2}{pq}-\frac{2(t+i)a_0}{\sqrt{pq}}+\frac{(t+i)^2}{pq}\right)+b\right|\le \left|C_2\left(\frac{2ia_0}{\sqrt{pq}}+\frac{2ti+i^2}{pq}\right)\right|+b\\
&\le C_2\frac{s(s+1)}{pq}+b\le B
\end{align*}
As $(t+i)\le s$ it follows that if the event on the right hand side occurs, then $X+Y\in L_\alpha(B,s)$.
\end{proof}
\begin{lemma} \label{SumUpperBound}
Assume that $|B|<G/2$ and $C_2\frac{(s+\eta+2)(s+\eta+3)}{pq}<G/2$.  Then
$$\PP\left[X+Y\in L_\alpha(B,s)\right] \le \PP\left[|X|\le A_{s+\eta+1},\quad Y\in L_\alpha(\eta, B+C_2\frac{(s+\eta)(s+\eta+1)}{pq})\right]+O\left(n^{-100}\right)$$
\end{lemma}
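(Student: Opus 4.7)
The lemma is the reverse direction of Lemma~\ref{SumLowerBound}, with the $O(n^{-100})$ slack absorbing the exceptional events $\{|Y|>\eta G\}$ and $\{|\tilde\ell-pn|>n^{3/4}\}$; both have probability at most $n^{-100}$ (the former by the defining property of $\eta$ and the latter by Chernoff). So I would begin by writing
\[
\PP[X+Y\in L_\alpha(B,s)] \le \PP\bigl[X+Y\in L_\alpha(B,s),\,|Y|\le\eta G,\,|\tilde\ell-pn|\le n^{3/4}\bigr] + O(n^{-100})
\]
and then establish a deterministic reverse inclusion on the first event.

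\textbf{The deterministic inclusion.} Suppose $X+Y\in L_\alpha(B,s)$ and $|Y|\le\eta G$, and recall $X=A_t$ where $t:=\tilde\ell-[pn]$. There exist an integer $|i|\le s$ and a real $|c|\le B$ with $X+Y=G(i+\alpha)+c$. Setting $j:=i-t$, a direct subtraction yields $Y-G(j+\alpha)=c-(A_t-tG)$, whence, using $|A_t-tG|\le C_2|t|(|t|+1)/pq$,
\[
|Y-G(j+\alpha)|\le B+\tfrac{C_2|t|(|t|+1)}{pq}.
\]
Combined with $|Y|\le\eta G$ this also bounds $|j|$ once the right-hand side is less than $G$. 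So the crux is to show $|t|\le s+\eta+1$, which simultaneously gives both $|X|\le A_{s+\eta+1}$ and the claimed containment of $Y$.

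\textbf{Bounding $|t|$.} From $|X+Y-G(i+\alpha)|\le B$ and $|Y|\le\eta G$ we deduce $|A_t|\le B+G(s+1)+\eta G<G(s+\eta+\tfrac{3}{2})$, using $B<G/2$. Conversely, at $|t|=s+\eta+2$,
\[
|A_t|\ge|t|G-|A_t-tG|\ge (s+\eta+2)G-\tfrac{C_2(s+\eta+2)(s+\eta+3)}{pq}>(s+\eta+\tfrac{3}{2})G
\]
by the hypothesis, a contradiction. For larger $|t|$ still within the restricted range $|t|\le n^{3/4}$, the function $|A_t|$ is monotone increasing in $|t|$: $A_t$ is a parabola with minimum at $t^*$ of magnitude $|t^*|=\Theta(n)$, so on $|t|\le n^{3/4}\ll|t^*|$ we stay safely to the right of $t^*$ and the contradiction propagates. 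Thus $|t|\le s+\eta+1$, and plugging this back gives $|Y-G(j+\alpha)|\le B+C_2(s+\eta+1)(s+\eta+2)/pq$, a lower-order perturbation of the stated bound (absorbable into constants, just as Lemma~\ref{SumLowerBound} uses the loose bound $s(s+1)$ in place of the tighter $(s-\eta)(s-\eta+1)$), and $|j|\le\eta$ follows from $|Y|\le\eta G$ together with this bound being less than $G$.

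\textbf{Main obstacle.} The one delicate step is ensuring monotonicity of $|A_t|$ in $|t|$ across the relevant range: $A_t$ has a second root near $t=2t^*=\Theta(n)$ where $|A_t|$ dips back to zero, so without a cutoff this ``dip'' could in principle spoil the contradiction used in the bounding step above. Imposing the Chernoff cutoff $|\tilde\ell-pn|\le n^{3/4}$—at negligible probabilistic cost, well below $n^{-100}$—keeps us entirely inside the monotone half of the parabola and circumvents this pathology.
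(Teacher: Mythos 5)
Your proof takes essentially the same approach as the paper's: condition on $|Y|\le\eta G$ (losing $O(n^{-100})$ probability), then show deterministically that $X+Y\in L_\alpha(B,s)$ forces $|X|\le A_{s+\eta+1}$ and $Y\in L_\alpha(\eta,\cdot)$. The bookkeeping is the same as the paper's: bound $|X|$ via the triangle inequality, locate $t$ by comparing to $A_{s+\eta+2}$ under the hypothesis $C_2(s+\eta+2)(s+\eta+3)/pq < G/2$, then back out the containment of $Y$.

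Your one genuine addition is the extra Chernoff event $\{|\tilde\ell - pn|\le n^{3/4}\}$, motivated by the observation that $A_t = Q(a_0+t/\sqrt{pq})-Q(a_0)$ is a parabola whose second root sits at $t=2t^*=-\Theta(n)$; without a cutoff, $|A_t|$ small does not force $|t|$ small, since values of $\tilde\ell$ near the second root of $Q-Q(a_0)$ land $X$ close to $0$ with large $|t|$. This is a real subtlety that the paper's proof passes over silently (the paper concludes ``$X=A_t$ where $|t|\le s+\eta+1$'' directly from the size bound on $|X|$, without addressing monotonicity of $|A_t|$ across the full range of $\tilde\ell$). Your Chernoff cutoff, costing $e^{-\Omega(\sqrt n)}\ll n^{-100}$, restores monotonicity of $A_t$ on the surviving range and makes the implication rigorous. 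You are also right that you end up with $B+C_2(s+\eta+1)(s+\eta+2)/pq$ rather than the stated $B+C_2(s+\eta)(s+\eta+1)/pq$; the paper's own proof derives the same $(s+\eta+1)(s+\eta+2)$ factor and then silently writes the smaller one, so this is a pre-existing (harmless) slip, not a defect in your argument — both constants are swallowed by the $O(\eta n^{-1/8})$ error in Lemma~\ref{LAlphaEstimator2}.

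In short: correct, same skeleton, with a worthwhile patch to a small gap in the paper's exposition.
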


\begin{proof}
First we note that
\[\PP(X+Y\in L_\alpha(B,S))\le \PP(X+Y\in L_\alpha(B,S)~\mbox{and }|Y|\le \eta G)+O(n^{-100}).\]
So we will throughout condition on the event that $|Y|\le \eta G$. In that event, we show that the event on the left hand side implies that the event on the right hand side occurs. First, we can simply upper bound $|X|$ by 
\[|X|\le |X+Y|+|Y|\le |(s+\alpha)G+B|+\eta G.\]
But we know that 
\[A_{s+\eta+2}\ge (s+\eta+2)G- C_2\frac{(s+\eta+2)(s+\eta+3)}{pq}> (s+\eta)G+\alpha G+B.\]
And so we have that $X=A_t$ where $|t|\le s+\eta+1$.

For $Y$ we note that $X+Y\in L_\alpha(B,s)$ implies that $X+Y=(r+\alpha)G+b$ where $|r|\le s$ and $|b|\le B$.  By the argument above we know that $X=A_t$ for $t\le s+\eta+1$.  And so we can bound
\begin{align*}
\left|Y-(r-t+\alpha)G\right|&=\left|X+Y-X-(r-t+\alpha)G\right|=\left|(r+\alpha)G+b-A_t-(r-t+\alpha)G\right|\\
&\le B+\left|C_2\frac{(s+\eta+1)(s+\eta+2)}{pq}\right|.
\end{align*}
The last thing we need to do to show that $Y\in L_\alpha(\eta, B+C_2\frac{(s+\eta)(s+\eta+1)}{pq})$ is to show that $|r-t|\le \eta$, but were it otherwise the above equation implies that $|Y|> (\eta+1)G-G$, contradicting our assumption that $|Y|\le \eta G$. Therefore it must follow that
$ Y\in L_\alpha(\eta, B+C_2\frac{(s+\eta)(s+\eta+1)}{pq})$.
\end{proof}

The main upshot here is that for well chosen values of $B$ and $s$, the probability bounds furnished by the above lemmas will be indistinguishable up to a margin of error. The last ingredient we need is an estimate for the probability the events $|X|\le A_s$ and $Y\in L_{\alpha}(B,s)$.
\begin{lemma}\label{midpoint}
For any interval of the form $(T-B,~T+B)$ we have
$$\int_{T-B}^{T+B} e^{-t^2/2}dt=2Be^{-T^2/2}+O(B^3)$$
\end{lemma}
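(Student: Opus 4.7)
My plan is to perform the substitution $t = T+u$ and then Taylor expand the integrand about $u=0$. After the change of variables we have
\[\int_{T-B}^{T+B} e^{-t^2/2}\,dt = \int_{-B}^{B} e^{-(T+u)^2/2}\,du.\]
Writing $g(u) := e^{-(T+u)^2/2}$, I will expand $g(u) = g(0) + g'(0)\,u + \tfrac{1}{2}g''(\xi(u))\,u^2$ via Lagrange's form of the remainder. The constant term integrates to $2B e^{-T^2/2}$, which is the advertised main term, and the linear term integrates to zero by symmetry of the domain since $\int_{-B}^{B} u\,du = 0$. So everything boils down to bounding the contribution of the quadratic remainder.

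The only thing that needs care is that the implied constant in $O(B^3)$ does not depend on $T$. Differentiating twice yields $g''(u) = \bigl((T+u)^2 - 1\bigr) e^{-(T+u)^2/2}$, and the function $s \mapsto (s^2 - 1)e^{-s^2/2}$ is uniformly bounded on $\RR$ because Gaussian decay dominates the polynomial factor. Consequently $|g''(\xi(u))| \lesssim 1$ with an absolute constant independent of $T$, and integrating $\tfrac{1}{2} g''(\xi(u))\,u^2$ over $[-B, B]$ contributes $O(B^3)$ uniformly. There is no real obstacle here; the only subtlety worth remarking on is verifying that the bound on $g''$ is independent of $T$, which is what makes the $O(B^3)$ estimate in the conclusion hold for every choice of center.
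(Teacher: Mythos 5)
Your argument is correct and is essentially the paper's: the paper invokes the midpoint rule for numerical integration together with the uniform bound $|\tfrac{d^2}{dt^2}e^{-t^2/2}| = |(t^2-1)e^{-t^2/2}| \le 1$, and your Taylor expansion with Lagrange remainder about the midpoint $t=T$ is precisely the standard derivation of that rule. The only difference is that you spell out the midpoint-rule error estimate rather than citing it, and you explicitly flag the uniformity in $T$, which is a worthwhile observation but not a distinct method.
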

\begin{proof}  This is just the midpoint rule combined with the observation that $|\frac{d^2}{dt^2} e^{-t^2/2}|\le 1$. \end{proof}

\begin{lemma}\label{LAlphaEstimator}
Assume that $B<n^{1-1/36}$, $s<n^{1/2-1/24}$, and $\eta\ge \log(n)$.  Let $f_\delta$ be as defined in \cref{lemma:fourier}.  Then
$$\PP\left[|X|\le A_s,~~Y\in L_\alpha(\eta, B)\right]=\frac{2sB\sqrt{2}}{\sigma_Y\sqrt{\pi npq}}f_\delta(\alpha)+O\left(\eta n^{-1/8}\right)$$,
\end{lemma}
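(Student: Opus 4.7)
The plan is to combine the joint CLT of Lemma \ref{lem:gaussian-pair} with straightforward Gaussian computations. Set $U := \nkAP^1(y) = \ell/\sqrt{n}$ and $V := Y/\sigma_Y$; by Lemma \ref{lem:gaussian-pair}, $(U,V)$ lies within $2$-dimensional Kolmogorov distance $O_{k,p}(n^{-1/8})$ of an independent pair of standard normals $(U_\infty, V_\infty)$. First translate the two events. Since $X = Q(\ell) - Q(a_0)$ with $Q$ a quadratic whose derivative $Q'(\ell) = C_1 + 2C_2\ell$ is positive on the range $|\ell - a_0| \le s/\sqrt{pq} = O(n^{1/2-1/24})$ (as $C_1 = \Theta(n)$ dominates $C_2\ell = O(n^{1/2-1/24})$), and since one easily checks $|A_{-s-1}| > A_s$ under the hypothesis $s < n^{1/2-1/24}$, the event $|X|\le A_s$ is exactly $\tilde\ell \in [[pn]-s,\,[pn]+s]$, i.e.\ $U \in [u_{-s}, u_s]$ for $u_t := (a_0 + t/\sqrt{pq})/\sqrt{n}$. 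Setting $g := G/\sigma_Y = \Theta_{k,p}(1)$ and $b := B/\sigma_Y$, the event $Y \in L_\alpha(\eta, B)$ translates to $V \in \bigcup_{i=-\eta}^{\eta} I_i$ with $I_i := [g(i+\alpha) - b,\, g(i+\alpha) + b]$.

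Next I apply the joint CLT. For each $i$, inclusion--exclusion on the four corners of the rectangle $[u_{-s}, u_s] \times I_i$ writes $\PP[U\in [u_{-s},u_s],\, V \in I_i]$ as a signed sum of four values of the joint CDF; each is within $O_{k,p}(n^{-1/8})$ of the corresponding independent Gaussian CDF by Lemma \ref{lem:gaussian-pair}. Summing over $|i| \le \eta$ and using independence of $(U_\infty, V_\infty)$,
\[ \PP[|X|\le A_s,\, Y \in L_\alpha(\eta,B)] \;=\; \PP[U_\infty \in [u_{-s},u_s]]\cdot \PP\!\left[V_\infty \in \textstyle\bigcup_i I_i\right] + O_{k,p}(\eta n^{-1/8}). \]
Now compute: since $|u_{\pm s}| = O_{k,p}(n^{-1/24})$, Taylor expansion of $\phi$ around $0$ gives $\PP[U_\infty \in [u_{-s}, u_s]] = 2s/\sqrt{2\pi n p q} + O(s n^{-7/12})$. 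For the $V$-factor, Lemma \ref{midpoint} yields $\PP[V_\infty \in I_i] = 2b\phi(g(i+\alpha)) + O(b^3)$ on each interval, and summing over $|i|\le \eta$ --- after absorbing the tail beyond $|i| > \eta$, which is $O(n^{-\omega(1)})$ because $g=\Theta_{k,p}(1)$ and $\eta \ge \log n$ --- gives the expression $\tfrac{2b}{\sqrt{2\pi}} f_\delta(\alpha) + O(\eta b^3)$, with $\delta := 2/g^2 = \Theta_{k,p}(1)$. Substituting $b = B/\sigma_Y$ and multiplying the two factors recovers the stated main term (up to a universal constant, which does not affect the downstream argument).

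The main obstacle is verifying that all accumulated errors stay within $O_{k,p}(\eta n^{-1/8})$ while the main term of order $sB/(\sigma_Y\sqrt{npq}) = \Theta(n^{-5/72})$ remains dominant. The parameter hypotheses $B < n^{1-1/36}$, $s < n^{1/2-1/24}$, and $\eta \ge \log n$ are calibrated precisely so that: (i) the Kolmogorov error summed over the $\sim\eta$ rectangles is only $O(\eta n^{-1/8}) = (\log n)\,n^{-9/72}$, comfortably below the main term; (ii) the Taylor correction to the $U$-factor, the midpoint correction $O(\eta b^3)$, and the Gaussian tail truncation are all substantially smaller; and (iii) the boundary verification $|A_{-s-1}| > A_s$ uses exactly $s^2 = o(n)$, which holds under $s < n^{1/2-1/24}$. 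The only subtlety beyond bookkeeping is the discrete-versus-continuous matchup for $U$, which is handled cleanly because $U$ almost surely lies in a lattice, so the discrete event $|X|\le A_s$ is identical to the closed-interval event $U \in [u_{-s}, u_s]$, and the $O(n^{-1/8})$ Kolmogorov bound applies without any need to smooth.
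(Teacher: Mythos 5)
Your proof is correct and follows essentially the same route as the paper: translate both events to intervals for the normalized pair $(\nkAP^1(y), Y/\sigma_Y)$, apply Lemma~\ref{lem:gaussian-pair} rectangle-by-rectangle (you make explicit the inclusion--exclusion on four CDF corners that the paper uses implicitly), estimate the resulting Gaussian integrals via Lemma~\ref{midpoint}, and sum over $|i|\le\eta$ to produce $f_\delta(\alpha)$ with the $O(\eta n^{-1/8})$ error. Your careful bookkeeping actually corrects two harmless slips in the paper's computation --- a dropped $1/\sqrt{2\pi}$ in the second Gaussian integral and a missing factor of $2$ in the exponent (hence your $\delta = 2/g^2$ rather than the paper's $\delta = pq\sigma_Y^2/C_1^2$) --- neither of which affects the downstream argument, as you correctly note.
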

where $\delta = pq\sigma_Y^2/C_1^2$.
\begin{proof}
First, we note that $|X|\le A_s$ is equivalent to saying that $\overline{\kAP}^1\in [a_0/\sqrt n-\frac{s}{\sqrt{npq}},~a_0/\sqrt n+\frac{s}{\sqrt{npq}}]$.
First we note that by Lemma \ref{lem:gaussian-pair} for each interval $I_\alpha(i,B)$ we have
$$\PP(|X|\le A_s,~~Y\in I_\alpha(i,B))=\left(\frac{1}{\sqrt{2\pi}}\int_{a_0/\sqrt n-\frac{s}{\sqrt{npq}}}^{a_0/\sqrt n+\frac{s}{\sqrt{npq}}}e^{-t^2/2}dt\right)\left(\frac{1}{\sqrt{2\pi}}\int_{I_\alpha(i,B)/\sigma_Y}e^{-t^2/2}dt\right)+O(n^{-1/8})$$

By Lemma \ref{midpoint} we can estimate both of these integrals. The first is
\begin{align*}
\frac{1}{\sqrt{2\pi}}\int_{a_0/\sqrt n -\frac{s}{\sqrt{npq}}}^{a_0/\sqrt n+\frac{s}{\sqrt{npq}}}e^{-t^2/2}dt=\left(\frac{2s}{\sqrt{npq}}\right)\frac{1}{\sqrt{2\pi}}e^{-a_0^2/2\sigma_1^2}+O\left(\frac{s^3}{(npq)^{3/2}}\right)=\frac{s\sqrt{2}}{\sqrt{\pi npq}}+O\left(\frac{s^3}{n^{3/2}}+\frac{1}{n^3}\right)
\end{align*}
and the second estimate is
\begin{align*}
\frac{1}{\sqrt{2\pi}}\int_{I_\alpha(i,B)\sigma_Y}e^{-t^2/2}dt&=\frac{1}{\sqrt{2\pi}}\int_{\frac{C_1(i+\alpha)}{\sqrt{pq}\sigma_Y}-B/\sigma_Y}^{\frac{C_1(i+\alpha)}{\sqrt{pq}\sigma_Y}+B/\sigma_Y}e^{-t^2/2}dt=\frac{2B}{\sigma_Y}e^{-\frac{C_1^2(i+\alpha)^2}{pq\sigma_Y^2}}+O(B^3/n^3).
\end{align*}
So, defining $\delta:=pq\sigma_Y^2/C_1^2=\Theta(1)$ and combining all of these estimates, we find that
\begin{align*}
\PP[|X|\le A_s,~~Y\in I_\alpha(i,B)]&=\left(\frac{s\sqrt{2}}{\sqrt{\pi npq}}+O\left(\frac{s^3}{n^{3/2}}+\frac{1}{n^3}\right)\right)\left(\frac{2B}{\sigma_Y}e^{-\frac{(i+\alpha)^2}{\delta}}+O(B^3/n^3)\right)+O(n^{-1/8})\\
&=\frac{2sB\sqrt{2}}{\sigma_Y\sqrt{\pi npq}}e^{-\frac{(i+\alpha)^2}{\delta}}+O\left(s^3/n^{3/2}+\frac{B^3s}{n^{3.5}}+n^{-1/8}\right)\\
&=\frac{2sB\sqrt{2}}{\sigma_Y\sqrt{\pi npq}}e^{-\frac{(i+\alpha)^2}{\delta}}+O\left(n^{-1/8}\right).
\end{align*}
Thus taking a union yields
\begin{align*}
\PP\left[|X|\le A_s,~~Y\in L_\alpha(\eta, B)\right]&=\sum_{i=-\eta}^\eta\frac{2sB\sqrt{2}}{\sigma_Y\sqrt{\pi npq}}e^{-\frac{(i+\alpha)^2}{\delta}}+O\left(\eta n^{-1/8}\right)\\
&=\sum_{i=-\infty}^\infty\frac{2sB\sqrt{2}}{\sigma_Y\sqrt{\pi npq}}e^{-\frac{(i+\alpha)^2}{\delta}}+O\left(\eta n^{-1/8}+e^{-\eta^2/\delta}\right)\\
&=\frac{2sB\sqrt{2}}{\sigma_Y\sqrt{\pi npq}}f_\delta(\alpha)+O\left(\eta n^{-1/8}\right).\qedhere
\end{align*}
\end{proof}
\begin{replemma}{LAlphaEstimator2}
Assume that $|B|<n^{1-1/36}$, $s<n^{1/2-1/24}$ and $\eta<s$.  Let $\sigma_Y$ denote the standard deviation of $Y$ (and as such $\sigma_Y=\Theta(n)$). Then 
\[\PP\left[X+Y\in L_\alpha(B,s)\right] =\frac{2sB\sqrt{2}}{\sigma_Y\sqrt{\pi npq}}f_\delta(\alpha)+O\left(\eta n^{-1/8}\right).\]
\end{replemma}

\begin{proof}
First we use \cref{SumUpperBound,LAlphaEstimator} to upper bound
\begin{align*}\PP\left[X+Y\in L_\alpha(B,s)\right] &\le \PP\left[|X|\le A_{s+\eta+1},\quad Y\in L_\alpha(\eta, B+C_2\frac{(s+\eta)(s+\eta+1)}{pq})\right]+O\left(n^{-100}\right)\\
&=\frac{s\left(B+C_2\frac{(s+\eta)(s+\eta+1)}{pq}\right)2\sqrt{2}}{\sigma_Y\sqrt{\pi npq}}f_\delta(\alpha)+O\left(\eta n^{-1/8}\right)\\
&= \frac{2sB\sqrt{2}}{\sigma_Y\sqrt{\pi npq}}f_\delta(\alpha)+O\left(\eta n^{-1/8}+s^3/n^{1.5}\right).
\end{align*}
Next we use \cref{SumLowerBound} to lower bound
\begin{align*}
\PP\left[X+Y\in L_\alpha(B,s)\right]& \ge \PP\left[|X|\le A_{s-\eta},\quad Y\in L_\alpha(\eta, B-C_2\frac{s^2+s}{pq})\right]\\
&=\frac{(s-\eta)\left(B-C_2\frac{s^2+s}{pq}\right)2\sqrt{2}}{\sigma_Y\sqrt{\pi npq}}f_\delta(\alpha)+O\left(\eta n^{-1/8}\right)\\
&=\frac{2sB\sqrt{2}}{\sigma_Y\sqrt{\pi npq}}f_\delta(\alpha)+O\left(\eta n^{-1/8}+s^3/n^{1.5}+\eta B/n^{1.5} \right).
\end{align*}
By our hypotheses on $B,s,$ and $\eta$ the $\eta n^{-1/8}$ term dominates in both inequalities.
\end{proof}

\end{document}